\documentclass[12pt]{amsart}
\usepackage{amsthm, amssymb}
\usepackage{amsfonts, amscd}
\usepackage{epsfig,multicol}
\setlength{\topmargin}{0cm} \setlength{\oddsidemargin}{0cm}
\setlength{\evensidemargin}{0cm} \setlength{\textheight}{22cm}
\setlength{\textwidth}{15.6cm}
\theoremstyle{plain} \numberwithin{equation}{section}
\newtheorem{thm}{Theorem}[section]
\newtheorem{cor}[thm]{Corollary}
\newtheorem{prop}[thm]{Proposition}
\newtheorem{lem}[thm]{Lemma}

\theoremstyle{definition}

\newtheorem{exam}[thm]{Example}
\newtheorem*{rem}{Remark}

\def\Z{\Bbb Z}
\def\C{\Bbb C}
\def\R{\Bbb R}
\def\CO{\Lambda}
\def\SA{S}
\def\P{S}

 \DeclareMathOperator{\GL}{GL}
\DeclareMathOperator{\Aut}{Aut}

\def\diffeomorphism{homeomorphism}
\def\diffeomorphic{homeomorphic}

\begin{document}
\title[2-torus manifolds]{\large \bf Equivariant classification of
2-torus manifolds}
\author[Zhi L\"u and Mikiya Masuda ]{Zhi L\"u and Mikiya Masuda}
\footnote[0]{{\bf Keywords.} 2-torus manifold, equivariant
classification.
\endgraf
 {\bf 2000AMS Classification:} 57S10,  14M25, 52B70.
 \endgraf
 The first author is supported by grants from NSFC (No. 10371020 and No. 10671034) and JSPS (No. P02299).}
\address{Institute of Mathematics, School of Mathematical Science, Fudan University, Shanghai,
200433, P.R. China.} \email{zlu@fudan.edu.cn}

\address{Department of Mathematics, Osaka City
University, Sumiyoshi-ku, Osaka 558-8585, Japan.}
\email{masuda@sci.osaka-cu.ac.jp}


\maketitle


\section{Introduction}

In this paper, we consider the equivariant classification of locally
standard 2-torus manifolds. A 2-torus manifold is a closed smooth manifold
 of dimension $n$ with an effective action of a 2-torus group
$(\Z_2)^n$ of rank $n$, and it is said to be locally standard if it
is locally isomorphic to a faithful representation of $(\Z_2)^n$ on
$\R^n$. The orbit space $Q$ of a locally standard 2-torus $M$ by the
action is a nice manifold with corners. When $Q$ is a simple convex
polytope, $M$ is called a small cover and studied in \cite{dj}. A
typical example of a small cover is a real projective space $\R P^n$
with a standard action of $(\Z_2)^n$. Its orbit space is an
$n$-simplex. On the other hand, a typical example of a compact
non-singular toric variety is a complex projective space $\C P^n$
with a standard action of $(\C^*)^n$ where $\C^*=\C\backslash\{0\}$.
$\C P^n$ has complex conjugation and its fixed point set is $\R
P^n$.  More generally, any compact non-singular toric variety admits
complex conjugation and its fixed point set often provides an
example of a small cover.
Similarly to the theory of toric varieties, an interesting
connection among topology, geometry and combinatorics is discussed
for small covers in \cite{dj}, \cite{djs} and \cite{ga-sc02}.
Although locally standard 2-torus manifolds form a much wider class
than small covers, one can still expect such a connection.
See \cite{l} for the study of 2-torus manifolds
from the viewpoint of cobordism.

\vskip .2cm

The orbit space $Q$ of a locally standard 2-torus manifold $M$
contains a lot of topological information on $M$. For instance, when
$Q$ is a simple convex polytope (in other words, when $M$ is a small
cover), the betti numbers of $M$ (with $\Z_2$ coefficient) are
described in terms of face numbers of $Q$ (\cite{dj}). This is not
the case for a general $Q$, but the euler characteristic of $M$ can
be described in terms of $Q$ (Theorem~\ref{euler}). Although $Q$
contains a lot of topological information on $M$, $Q$ is not
sufficient to reproduce $M$, i.e., there are many locally standard
2-torus manifolds with the same orbit space in general. We need two
data to reproduce $M$ from $Q$.  One is a characteristic function on
$Q$ introduced in \cite{dj}. It is a map from the set of
codimension-one faces of $Q$ to $(\Z_2)^n$ satisfying a certain
linearly independence condition. Roughly speaking, a characteristic
function provides information on the set of non-free orbits in $M$.
The other data is a principal $(\Z_2)^n$-bundle over $Q$ which
provides information on the set of free orbits in $M$.  It turns out
that the orbit space $Q$ together with these two data uniquely
determines a locally standard 2-torus manifold up to equivariant
homeomorphism (Lemma~\ref{form}). When $Q$ is a simple convex
polytope, any principal $(\Z_2)^n$-bundle over it is trivial; so
only a characteristic function matters in this case (\cite{dj}).

\vskip .2cm

The set of isomorphism classes in all principal $(\Z_2)^n$-bundles
over $Q$ can be identified with $H^1(Q;(\Z_2)^n)$. Let $\Lambda(Q)$
be the set of all characteristic functions on $Q$. Then each element
in $H^1(Q;(\Z_2)^n)\times \Lambda(Q)$ determines a locally standard
2-torus manifold with orbit space $Q$.  However, different elements
in the product may produce equivariantly homeomorphic locally
standard 2-torus manifolds. Let $\Aut(Q)$ be the group of
self-homeomorphisms of $Q$ as a manifold with corners.  It naturally
acts on $H^1(Q;(\Z_2)^n)\times \Lambda(Q)$ and one can see that
equivariant homeomorphism classes in locally standard 2-torus
manifolds with orbit space $Q$ can be identified with the coset
$\big( H^1(Q;(\Z_2)^n)\times \Lambda(Q)\big)/\Aut(Q)$, see
Proposition~\ref{QGdiff}.

\vskip .2cm

It is not easy in general to count elements in the coset above, but
we can manage when $Q$ is a compact surface with only one boundary.
In this case, codimension-one faces sit in the boundary circle, so a
characteristic function on $Q$ is nothing but a coloring on a circle
(with vertices) with three colors.

\vskip .2cm

The  paper is organized as follows. In section~\ref{sect:2torus}, we
introduce the notion of locally standard 2-torus manifold and give
several examples. Following Davis and Januszkiewicz \cite{dj}, we
define a characteristic function and construct a locally standard
2-torus manifold from a characteristic function and a principal
bundle in section~\ref{sect:chara}. In section~\ref{sect:euler} we
describe the euler characteristic of a locally standard 2-torus
manifold in terms of its orbit space. Section~\ref{sect:equiv}
discusses three equivalence relations among locally standard 2-torus
manifolds and identify them with some cosets. We count the number of
colorings on a circle in section~\ref{sect:color}. Applying this
result, we find in section~\ref{sect:2dim} the number of equivariant
homeomorphism classes in locally standard 2-torus manifolds when the
orbit space is a compact surface with only one boundary.

\vskip .3cm

\section{2-torus manifolds} \label{sect:2torus}

We denote the quotient additive group $\Z/2\Z$ by $\Z_2$ throughout
this paper.  The natural action of a 2-torus $({\Bbb Z}_2)^n$ of
rank $n$ on ${\Bbb R}^n$ defined by
$$(x_1,...,x_n)\longmapsto ((-1)^{g_1}x_1,...,(-1)^{g_n}x_n),
\qquad (g_1,...,g_n)\in ({\Bbb Z}_2)^n$$ is called {\em the standard
representation} of $({\Bbb Z}_2)^n$. The orbit space is a positive
cone ${\Bbb R}_{\geq 0}^n$. Any real $n$-dimensional faithful
representation of $(\Z_2)^n$ is obtained from the standard
representation by composing a group automorphism of $(\Z_2)^n$, up
to isomorphism.  Therefore the orbit space of the faithful
representation space can also be identified with ${\Bbb R}_{\geq
0}^n$. \vskip .2cm

A 2-torus manifold  $M$ is a closed smooth manifold of dimension $n$
with an effective smooth action of $({\Bbb Z}_2)^n$.  We say that
$M$ is  {\em locally standard} if  for each point $x$ in $M$, there
is a $({\Bbb Z}_2)^n$-invariant neighborhood $V_x$ of $x$ such that
$V_x$ is equivariantly homeomorphic to an invariant open subset of a
real $n$-dimensional faithful representation space of $(\Z_2)^n$.

\begin{rem}
The notion of a torus manifold is introduced in \cite{ha-ma03}. It
is a closed smooth manifold of dimension $2n$ with an effective
smooth action of a compact $n$-dimensional torus $(S^1)^n$ having a
fixed point.  (More precisely speaking, an orientation data on $M$
called an omniorientation in \cite{bp} is incorporated in the
definition.) There is also a notion of local standardness in this
setting (\cite{dj}). Although many interesting examples of torus
manifolds are locally standard (e.g. this is the case for compact
non-singular toric varieties with restricted action of the compact
torus, more generally for torus manifolds with vanishing odd degree
cohomology, \cite{ma-pa03}), the local standardness is not assumed
in the study of \cite{ha-ma03} and \cite{m} because a combinatorial
object called a multi-fan can be defined without assuming it.
However, the existence of a fixed point is not assumed for a 2-torus
manifold unlike a torus manifold.
\end{rem}

For a locally standard 2-torus manifold $M$, the orbit space $Q$ of
$M$ naturally becomes a manifold with corners (see \cite{d2} for the
details of a manifold with corners). Therefore the notion of a face
can be defined for $Q$.  In this paper we assume that a face is
connected.  We call a face of dimension $0$ a {\em vertex}, a face
of dimension one an {\em edge} and a codimension-one face a {\em
facet}.

\vskip .2cm An $n$-dimensional convex polytope $P$ is said to be
{\em simple} if exactly $n$ facets meet at each of its vertices.
Each point of a simple convex polytope $P$ has a neighborhood which
is affine isomorphic to an open subset of the positive cone ${\Bbb
R}_{\geq 0}^n$, so $P$ is an $n$-dimensional manifold with corners.
A locally standard 2-torus manifold $M$ is said to be a {\em small
cover} when its orbit space is a simple convex polytope, see
\cite{dj}.

\vskip .2cm

We call a closed, connected, codimension-one submanifold of $M$ {\em
characteristic} if it is a connected component of the set fixed
pointwise by some ${\Bbb Z}_2$ subgroup.  Since $M$ is compact, $M$
has only finitely many characteristic submanifolds.  The action of
$(\Z_2)^n$ is free outside the union of all characteristic
submanifolds, in other words, a point of $M$ with non-trivial
isotropy subgroup is contained in some characteristic submanifold of
$M$.

\vskip .2cm

Through the quotient map $M\to Q$, a fixed point in $M$ corresponds
to a vertex of $Q$ and a characteristic submanifold of $M$
corresponds to a facet of $Q$. A connected component of the
intersection of $k$ characteristic submanifolds of $M$ corresponds
to a codimension-$k$ face of $Q$, so a codimension-$k$ face of $Q$
is a connected component of the intersection of $k$ facets. In
particular, any codimension-two face of $Q$ is a connected component
of the intersection of two facets of $Q$, which means that $Q$ is
{\em nice}, see \cite{d2}.

\vskip .2cm

We shall give examples of locally standard 2-torus manifolds.

\begin{exam} \label{RPn}
A real projective space ${\Bbb R}P^n$ with the standard $({\Bbb
Z}_2)^n$-action defined by
$$[x_0,x_1, ...,x_n]\longmapsto [x_0, (-1)^{g_1}x_1,...,(-1)^{g_n}x_n],
\qquad (g_1,...,g_n)\in ({\Bbb Z}_2)^n$$ is a locally standard
2-torus manifold. It has $n+1$ isolated points and $n+1$
characteristic submanifolds. The orbit space of ${\Bbb R}P^n$ by
this action is an $n$-simplex, so this locally standard 2-torus
manifold is actually a small cover.
\end{exam}

\begin{exam} \label{char}
Let $S^1$ denote the unit circle in the complex plane $\C$ and
consider two involutions on $S^1\times S^1$ defined by
$$t_1: (z, w)\longmapsto (-z, w),\qquad
t_2: (z,w)\longmapsto ({z}, \bar{w}).$$ Since $t_1$ and $t_2$ are
commutative, they define a $({\Bbb Z}_2)^2$-action on $S^1\times
S^1$, and it is easy to see that $S^1\times S^1$ with this action is
a locally standard 2-torus manifold. It has no fixed point and the
orbit space is ${\Bbb R}P^1\times I=S^1\times I$ where $I$ is a
closed interval.
\end{exam}

\begin{exam} \label{conn}
If $M_1$ and $M_2$ are both locally standard 2-torus manifolds of
the same dimension, then the equivariant connected sum of them along
their free orbits produces a new locally standard 2-torus manifold.
For example, we take ${\Bbb R}P^2$ in Example~\ref{RPn} and
$S^1\times S^1$ in Example~\ref{char} and do the equivariant
connected sum of them along their free orbits. The orbit space of
the resulting locally standard 2-torus manifold $M$ is the connected
sum of a 2-simplex with $S^1\times I$ at their interior points.  $M$
has five characteristic submanifolds and three of them have a fixed
point but the other two have no fixed point.
\end{exam}

If $M$ is a locally standard 2-torus manifold of dimension $n$ and a
subgroup of $(\Z_2)^n$ has an isolated fixed point, then the
isolated point must be fixed by the entire group $(\Z_2)^n$. This
follows from the local standardness of $M$. The following is an
example of a closed $n$-manifold with an effective $(\Z_2)^n$-action
which is not a locally standard 2-torus manifold.

\begin{exam} \label{nloc}
Consider two involutions on the unit sphere $S^2$ of $\R\times \C$
defined by
$$t_1:(x, z)\longmapsto (-x,-z),\qquad
t_2:(x, z)\longmapsto (x,\bar z).$$ Since $t_1$ and $t_2$ are
commutative, they define a $({\Bbb Z}_2)^2$-action on $S^2$.  But
$S^2$ with this action is not a locally standard 2-torus manifold
because the fixed point set of $t_1t_2$ consists of two isolated
points $(0,\pm\sqrt{-1})$ but they are not fixed by the entire group
$(\Z_2)^2$.
\end{exam}

\vskip .3cm

\section{Characteristic functions and principal bundles}
\label{sect:chara}

Let $Q$ be an $n$-dimensional nice manifold with corners. We denote
by ${\mathcal{F}}(Q)$ the set of facets of $Q$. A codimension-$k$
face of $Q$ is a connected component of the intersection of $k$
facets.  We call a map
$$\lambda:{\mathcal{F}}(Q)\longrightarrow ({\Bbb
Z}_2)^n$$ a {\em characteristic function} on $Q$ if it satisfies the
following linearly independent condition:
\begin{quote}
if a codimension-$k$ face $F$ of $Q$ is a connected component of the
intersection of $k$ facets $F_1,\dots,F_k$, then
$\lambda(F_1),\dots, \lambda(F_k)$ are linearly independent when
viewed as vectors of the vector space $(\Z_2)^n$ over the field
$\Z_2$.
\end{quote}
We denote by $G_F$ the subgroup of $(\Z_2)^n$ generated by
$\lambda(F_1),\dots,\lambda(F_k)$.

\begin{rem}
When $n\le 2$, it is easy to see that any $Q$ admits a
characteristic function.  When $n=3$, $Q$ admits a characteristic
function if the boundary of $Q$ is a union of $2$-spheres, which
follows from the Four Color Theorem, but $Q$ may not admit a
characteristic function otherwise. When $n\ge 4$, there is a simple
convex polytope which admits no characteristic function, see
\cite[Nonexamples 1.22]{dj}.
\end{rem}

A characteristic function arises naturally from a locally standard
2-torus manifold $M$ of dimension $n$ with orbit space $Q$. A facet
of $Q$ is the image of a characteristic submanifold of $M$ by the
quotient map $\pi\colon M\to Q$. To each element $F\in
{\mathcal{F}}(Q)$ we assign the nonzero element of $({\Bbb Z}_2)^n$
which fixes pointwise the characteristic submanifold $\pi^{-1}(F)$.
The local standardness of $M$ implies that this assignment satisfies
the linearly independent condition above required for a
characteristic function.

\vskip .2cm

Besides the characteristic function, a principal $(\Z_2)^n$-bundle
over $Q$ will be associated with $M$ as follows.  We take a small
invariant open tubular neighborhood for each characteristic
submanifold of $M$ and remove their union from $M$.  Then the
$(\Z_2)^n$-action on the resulting space is free and its orbit space
can naturally be identified with $Q$, so it gives a principal
$(\Z_2)^n$-bundle over $Q$.

\vskip .2cm

We have associated a characteristic function and a principal
$(\Z_2)^n$-bundle with a locally standard 2-torus manifold.
Conversely, one can reproduce the locally standard 2-torus manifold
from these two data.  This is done by Davis-Januszkiewicz \cite{dj}
when $Q$ is a simple convex polytope, but their construction still
works in our setting. Let $\xi=(E,\kappa, Q)$, where $\kappa\colon
E\to Q$, be a principal $({\Bbb Z}_2)^n$-bundle over $Q$ and let
$\lambda:{\mathcal{F}}(Q)\longrightarrow ({\Bbb Z}_2)^n$ be a
characteristic function on $Q$. We define an equivalence relation
$\sim$ on $E$ as follows: for $u_1, u_2\in E$
$$u_1\sim u_2\Longleftrightarrow \kappa(u_1)=\kappa(u_2) \text{
and $u_1=u_2g$ for some $g\in G_{F}$}$$ where $F$ is the face of $Q$
containing $\kappa(u_1)=\kappa(u_2)$ in its relative interior and
$G_{F}$ is the subgroup of $(\Z_2)^n$ defined at the beginning of
this section. Then the quotient space $E/\sim$, denoted by
$M(\xi,\lambda)$, naturally inherits the $({\Bbb Z}_2)^n$-action
from $E$.

\vskip .2cm

The following is proved in \cite{dj} when $Q$ is a simple convex
polytope, but the same proof works in our setting.

\begin{lem} \label{form}
If a locally standard 2-torus manifold $M$ over $Q$ has $\xi$ as the
associated principal $(\Z_2)^n$-principal bundle and $\lambda$ as
the characteristic function, then there is an equivariant
{\diffeomorphism} from $M(\xi,\lambda)$ to $M$ which covers the
identity on $Q$.
\end{lem}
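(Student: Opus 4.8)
The plan is to build the equivariant homeomorphism $M(\xi,\lambda)\to M$ by a local-to-global argument, patching together local identifications over the faces of $Q$. First I would recall how $\xi$ and $\lambda$ are extracted from $M$: remove small invariant tubular neighborhoods $N_1,\dots,N_m$ of the characteristic submanifolds $X_1=\pi^{-1}(F_1),\dots,X_m=\pi^{-1}(F_m)$ from $M$; the $(\Z_2)^n$-action on $M_0:=M\setminus\bigcup_i \mathring N_i$ is free, and $\xi=(E,\kappa,Q)$ is (a completion of) $M_0\to M_0/(\Z_2)^n$. So there is a tautological equivariant map $E\supseteq M_0\to M$, the inclusion, which covers the identity on $Q$ away from the facets. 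The whole problem is to show this extends, after the quotient by $\sim$, to all of $M(\xi,\lambda)$ and is a homeomorphism.

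The key step is a normal-form (slice) description near each face. By local standardness, a point $x\in M$ lying over a point of the relative interior of a codimension-$k$ face $F$ has an invariant neighborhood equivariantly homeomorphic to $(\Z_2)^n\times_{G_F}\big((\R_{\ge 0})^k\times\R^{n-k}\big)$, where $G_F\cong(\Z_2)^k$ acts on $(\R_{\ge0})^k$ by the coordinate sign changes and the $\lambda(F_j)$ are precisely the generators corresponding to the $k$ coordinate reflections — this is exactly the data recorded by $\lambda$. Removing the tubes corresponds, in this model, to deleting a neighborhood of the coordinate hyperplanes, and the corresponding piece of $E$ is the free $(\Z_2)^n$-space over the same chart of $Q$. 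Collapsing by $\sim$ over this chart reintroduces precisely the $G_F$-identifications, so $M(\xi,\lambda)$ restricted to such a chart is canonically identified with the model neighborhood in $M$. I would then check that these chart-wise identifications agree on overlaps: on the overlap of the charts for faces $F$ and $F'$ the relevant subgroup is $G_{F''}$ for the face $F''$ in whose relative interior the overlap point lies, and $G_{F''}$ is generated by the $\lambda$-values of the facets through $F''$, which is consistent in both charts because $\lambda$ is a single globally-defined function. Hence the local homeomorphisms glue to a well-defined continuous equivariant map $\Phi\colon M(\xi,\lambda)\to M$ covering $\mathrm{id}_Q$.

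Finally I would argue $\Phi$ is a homeomorphism. It covers the identity on $Q$ and is fibrewise a bijection: over a point in the relative interior of a codimension-$k$ face the fibre on both sides is a single $(\Z_2)^n/G_F$-orbit, and $\Phi$ sends it bijectively by construction; over a free point both fibres are full $(\Z_2)^n$-orbits and $\Phi$ is the identity there. A continuous bijection from a compact space ($M(\xi,\lambda)$ is compact since $E$ is a bundle over the compact $Q$ and the quotient map is closed) to a Hausdorff space is a homeomorphism, so we are done.

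I expect the main obstacle to be the overlap/compatibility check in the second step: one must be careful that the slice description of $M$ near a face, the structure of $E$ near that face, and the equivalence relation $\sim$ all use the \emph{same} identification of $G_F$ with a subgroup of $(\Z_2)^n$, and that these match continuously as the face varies (in particular at vertices, where several charts meet). Making the local models and their transition maps precise — essentially reproving the relevant part of \cite{dj} in the manifold-with-corners setting with a possibly nontrivial bundle $\xi$ — is where the real work lies; once the local picture is nailed down, well-definedness, continuity, equivariance, and the homeomorphism property are routine.
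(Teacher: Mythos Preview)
Your proposal is correct and is essentially the standard Davis--Januszkiewicz argument. The paper itself does not supply a proof of this lemma at all: it simply states that the result is proved in \cite{dj} when $Q$ is a simple convex polytope and that ``the same proof works in our setting.'' What you have written is a faithful fleshing-out of that argument, adapted to allow a nontrivial principal bundle $\xi$; your slice description $(\Z_2)^n\times_{G_F}\big((\R_{\ge 0})^k\times\R^{n-k}\big)$, the overlap compatibility via the common face $F''$, and the compact-to-Hausdorff bijection step are exactly the ingredients one needs, and your identification of the overlap check as the only place requiring care is accurate.
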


\vskip .3cm

\section{Euler characteristic of a locally standard 2-torus manifold} \label{sect:euler}

The following formula describes the euler characteristic $\chi(M)$
of a locally standard 2-torus manifold $M$ in terms of its orbit
space.

\begin{thm} \label{euler}
If $M$ is a locally standard 2-torus manifold over $Q$, then
\[
\chi(M)=\sum_F 2^{\dim F}\chi(F,\partial F) =\sum_F 2^{\dim
F}(\chi(F)-\chi(\partial F))
\]
where $F$ runs over all faces of $Q$.
\end{thm}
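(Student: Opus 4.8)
The plan is to compute the Euler characteristic by decomposing $M$ into pieces sitting over the faces of $Q$ and using additivity of the Euler characteristic (with compact supports) along a stratification. By Lemma~\ref{form}, we may assume $M=M(\xi,\lambda)=E/\!\sim$. For each face $F$ of $Q$, let $\Int F$ denote its relative interior; then $Q$ is the disjoint union of the $\Int F$ over all faces $F$, and correspondingly $M$ is the disjoint union of the subsets $M_F:=\pi^{-1}(\Int F)$, where $\pi\colon M\to Q$ is the quotient map. I would first identify $M_F$ explicitly: over $\Int F$ the equivalence relation $\sim$ collapses each fiber $(\Z_2)^n$ by the subgroup $G_F$ of rank $\codim F=n-\dim F$, so $M_F\to \Int F$ is a fiber bundle with fiber $(\Z_2)^n/G_F\cong(\Z_2)^{\dim F}$. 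Hence $M_F$ is a (possibly disconnected, possibly nontrivial) $2^{\dim F}$-sheeted covering space of $\Int F$, and in particular $\chi_c(M_F)=2^{\dim F}\chi_c(\Int F)$, where $\chi_c$ denotes Euler characteristic with compact supports.

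Next I would invoke additivity: for a space decomposed into finitely many locally closed pieces, $\chi_c$ of the whole is the sum of the $\chi_c$ of the pieces (this applies here since $M$ is a compact manifold and the $M_F$ are the strata of a nice stratification, each a manifold). Therefore
\[
\chi(M)=\chi_c(M)=\sum_F \chi_c(M_F)=\sum_F 2^{\dim F}\chi_c(\Int F).
\]
It remains to rewrite $\chi_c(\Int F)$ in the stated form. For a compact manifold with boundary $F$, one has $\chi_c(\Int F)=\chi(F)-\chi(\partial F)=\chi(F,\partial F)$; more precisely, since each face $F$ of $Q$ is itself a compact manifold with corners, $\partial F$ here should be read as the union of all proper faces of $F$, i.e.\ $\Int F=F\setminus\partial F$, and $\chi_c(\Int F)=\chi(F)-\chi(\partial F)$ follows from the long exact sequence of the pair $(F,\partial F)$ together with the fact that $\chi$ and $\chi_c$ agree on compact spaces. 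Substituting gives the first equality in the theorem, and the second is just the excision identity $\chi(F,\partial F)=\chi(F)-\chi(\partial F)$.

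The main obstacle, and the point deserving the most care, is the claim $\chi_c(M_F)=2^{\dim F}\chi_c(\Int F)$ for a covering that need not be trivial: one must be sure that the Euler characteristic of the total space of a $d$-sheeted covering of a reasonably nice base $B$ equals $d\cdot\chi_c(B)$ regardless of the monodromy. This is true because $\chi_c$ is multiplicative in fibrations with finite fiber (one can see it locally: over a trivializing open cover and its nerve, or via the transfer, or simply because a finite covering admits a CW structure lifting one on the base with each cell lifted $d$ times). I would state this as a short lemma rather than belabor it. A secondary subtlety is justifying additivity of $\chi_c$ for the stratification of the compact space $M$ by the $M_F$; since there are only finitely many faces and each $M_F$ is an open manifold, an induction on dimension of faces using the long exact sequence of compact supports (or simply a compatible CW structure on $M$ pulled back from one on $Q$ adapted to its faces) suffices. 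Everything else is bookkeeping.
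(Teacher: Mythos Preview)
Your proof is correct and follows the same idea as the paper's: stratify $M$ over the open faces of $Q$ and sum the Euler characteristics. The paper's argument is terser---it simply asserts that $M$ is the disjoint union of $2^{\dim F}$ copies of $F\setminus\partial F$---whereas you more carefully note that $M_F\to\Int F$ is a priori only a $2^{\dim F}$-sheeted covering and justify multiplicativity of $\chi_c$ for finite covers; this extra care is warranted, since for a nontrivial $\xi$ the covering need not split as a disjoint union of copies of the base.
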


\begin{proof}
As observed in Section~\ref{sect:chara}, $M$ is the disjoint union
of $2^{\dim F}$ copies of $F\backslash \partial F$ over all faces
$F$ of $Q$. This implies the former identity in the theorem.  The
latter identity is well-known. In fact, it follows from the homology
exact sequence for a pair $(F,\partial F)$.
\end{proof}

When $\dim M=2$, $Q$ is a surface with boundary and each boundary
component is a circle with at least two vertices if it has a vertex.

\begin{cor} \label{2dimeuler}
If $\dim M=2$ and $Q$ has $m$ vertices, then $\chi(M)=4\chi(Q)-m.$
\end{cor}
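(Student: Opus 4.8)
The plan is to derive Corollary~\ref{2dimeuler} directly from Theorem~\ref{euler} by enumerating the faces of the surface-with-boundary $Q$ according to their dimension. First I would split the sum $\chi(M)=\sum_F 2^{\dim F}(\chi(F)-\chi(\partial F))$ into three pieces: the unique $2$-dimensional face $Q$ itself, the $1$-dimensional faces (the edges), and the $0$-dimensional faces (the vertices). For the top face, $\dim F=2$, $\chi(F)=\chi(Q)$, and $\partial F$ is the boundary of the surface, a disjoint union of circles, so $\chi(\partial Q)=0$; this contributes $4\chi(Q)$. For a vertex $v$, $\dim F=0$, $\chi(\{v\})=1$ and $\partial\{v\}=\emptyset$, so each vertex contributes $2^0(1-0)=1$; there are $m$ of these.

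The remaining task is to show that the edge contributions cancel, i.e.\ $\sum_{e\ \text{edge}} 2^1(\chi(e)-\chi(\partial e))=0$. Each edge $e$ of $Q$ is a $1$-dimensional face, hence a connected $1$-manifold with corners (boundary a set of vertices); since $Q$ has at least two vertices on any boundary component that has a vertex, and the edges lie in the boundary circle(s), I would argue each edge is a closed arc with exactly two endpoints (or, in the degenerate case of a boundary circle with no vertex, there are no edges on it at all and that component is "all edge" with no corners — but then it contributes no vertices and is handled by the $\chi(\partial Q)=0$ computation; I should remark on this case). For an arc $e$, $\chi(e)=1$ and $\partial e$ consists of two points so $\chi(\partial e)=2$, giving $\chi(e)-\chi(\partial e)=-1$ per edge. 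Thus $\sum_e 2(\chi(e)-\chi(\partial e)) = -2E$ where $E$ is the number of edges. But on a circle the number of edges equals the number of vertices on that circle, so summing over all boundary components gives $E=m$, and the edge total is $-2m$. Adding everything: $\chi(M)=4\chi(Q)+(-2m)+m=4\chi(Q)-m$.

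I expect the main obstacle to be the careful bookkeeping of the degenerate/boundary cases rather than any deep idea: I must make sure (i) boundary circles with no vertices contribute nothing and are consistent with $\chi(\partial Q)=0$, (ii) the claim "number of edges on a circle equals number of vertices on it" is justified (a circle with $k\ge 1$ marked points is divided into exactly $k$ arcs), and (iii) the parenthetical hypothesis that a boundary component with a vertex has at least two vertices is used precisely to guarantee each edge really is an arc with two distinct endpoints rather than a circle or a point. Once these edge cases are pinned down, the computation $4\chi(Q)-2m+m$ is immediate. I would present it compactly:
\[
\chi(M)=4\chi(Q)+\sum_{e}2\bigl(\chi(e)-\chi(\partial e)\bigr)+\sum_{v}1
=4\chi(Q)-2m+m=4\chi(Q)-m.
\]
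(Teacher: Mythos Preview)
Your proof is correct and essentially identical to the paper's: both split the sum in Theorem~\ref{euler} by face dimension, use $\chi(\partial Q)=0$ for the top face, observe that each arc-edge contributes $-1$ while the number of such edges equals the number of vertices, and conclude $4\chi(Q)-2m+m=4\chi(Q)-m$. One small clarification on your degenerate case: a vertex-free boundary circle is itself a single edge (not ``no edges at all''), and it drops out of the edge sum because $\chi(S^1)-\chi(\emptyset)=0$, not because of the top-face computation---which is exactly how the paper handles it.
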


\begin{proof}
Since $\partial Q$ is a union of circles, $\chi(Q,\partial
Q)=\chi(Q)$. If a boundary circle has no vertex, then it is an edge
without boundary and its euler characteristic is zero.  So we may
neglect it. If $F$ is an edge with a vertex, then it has two
endpoints and $\chi(F,\partial F)=\chi(F)-\chi(\partial F)=-1$, and
if $F$ is a vertex, then $\chi(F,\partial F)=\chi(F)=1$. Since the
number of edges with a vertex and the number of vertices are both
$m$, it follows from Theorem~\ref{euler} that
\[
\chi(M)=2^2\chi(Q)-2m+m=4\chi(Q)-m.
\]
\end{proof}

\begin{rem}
When $\dim M=2$, it is not difficult to see that $M$ is orientable
if and only if $Q$ is orientable and the characteristic function
$\lambda \colon \mathcal F(Q) \to (\Z_2)^2$ associated with $M$
assigns exactly two elements to each boundary component of $Q$ with
a vertex, cf. \cite{na-ni05}. Therefore one can find the
{\diffeomorphism} type of $M$ from the corollary above and the
characteristic function $\lambda$.
\end{rem}

\vskip .3cm

\section{Classification of locally standard 2-torus manifolds} \label{sect:equiv}

In this section we introduce three notions of equivalence in locally
standard 2-torus manifolds over $Q$ and identify each set of
equivalence classes with a coset of $H^1(Q;(\Z_2)^n)\times
\Lambda(Q)$ by some action.

Following Davis and Januszkiewicz \cite{dj} we say that two locally
standard 2-torus manifolds $M$ and $M'$ over $Q$ are {\em
equivalent} if there is a {\diffeomorphism} $f: M\longrightarrow M'$
together with an element $\sigma\in \GL(n,{\Bbb Z}_2)$ such that
\begin{enumerate}
\item[(1)]
$f(gx)=\sigma(g)f(x)$ for all $g\in (\Z_2)^n$ and $x\in M$, and
\item[(2)]
$f$ induces the identity on the orbit space $Q$.
\end{enumerate}
When we classify locally standard 2-torus manifolds up to the above
equivalence, it suffices to consider locally standard 2-torus
manifolds of the form $M(\xi,\lambda)$ by Lemma~\ref{form}. We
denote by $\xi^\sigma$ the principal $(\Z_2)^n$-bundle $\xi$ with
$(\Z_2)^n$-action through $\sigma\in \GL(n,\Z_2)$. Then it would be
obvious that $M(\xi',\lambda')$ is equivalent to $M(\xi,\lambda)$ if
and only if there exists $\sigma\in \GL(n,{\Bbb Z}_2)$ such that
$\xi'=\xi^\sigma$ and $\lambda'=\sigma\circ\lambda$.

\vskip .2cm We denote by $\mathcal{P}(Q)$ the set of all principal
$(\Z_2)^n$-bundles over $Q$.  Since the classifying space of
$(\Z_2)^n$ is an Eilenberg-Maclane space $K((\Z_2)^n,1)$,
$\mathcal{P}(Q)$ can naturally be identified with $H^1(Q;(\Z_2)^n)$
and the action of $\sigma$ sending $\xi$ to $\xi^\sigma$ is nothing
but the action on $H^1(Q;(\Z_2)^n)$ induced from the automorphism
$\sigma$ on the coefficient $(\Z_2)^n$. With this understood, the
above fact implies the following.

\begin{prop} \label{equicor}
The set of equivalence classes in locally standard 2-torus manifolds
over $Q$ bijectively corresponds to the coset
$$\GL(n,\Z_2)\backslash\big(H^1(Q;(\Z_2)^n)\times \Lambda(Q)\big)$$
by the diagonal action.
\end{prop}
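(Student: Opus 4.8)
The plan is to assemble the bijection out of three pieces: (i) the parametrization of locally standard 2-torus manifolds over $Q$ by pairs $(\xi,\lambda)$ coming from Lemma~\ref{form}; (ii) the translation, already recorded in the paragraph preceding the proposition, of the equivalence relation on such manifolds into the relation $(\xi',\lambda')\sim(\xi,\lambda)$ iff $\xi'=\xi^\sigma$ and $\lambda'=\sigma\circ\lambda$ for some $\sigma\in\GL(n,\Z_2)$; and (iii) the identification of $\mathcal P(Q)$ with $H^1(Q;(\Z_2)^n)$ under which $\xi\mapsto\xi^\sigma$ becomes the coefficient action of $\sigma$. Granting these, the equivalence relation on $\mathcal P(Q)\times\Lambda(Q)$ is exactly the orbit relation of the diagonal $\GL(n,\Z_2)$-action on $H^1(Q;(\Z_2)^n)\times\Lambda(Q)$, so the set of equivalence classes is in bijection with the coset space $\GL(n,\Z_2)\backslash\big(H^1(Q;(\Z_2)^n)\times\Lambda(Q)\big)$.

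First I would make the map explicit. Send the equivalence class of a locally standard 2-torus manifold $M$ over $Q$ to the $\GL(n,\Z_2)$-orbit of $(\xi,\lambda)$, where $\xi$ is the principal $(\Z_2)^n$-bundle associated to $M$ (obtained by deleting invariant tubular neighborhoods of the characteristic submanifolds, as in Section~\ref{sect:chara}) and $\lambda$ is the associated characteristic function. Well-definedness: if $M$ is equivalent to $M'$ via $(f,\sigma)$, then $f$ restricts to an equivariant homeomorphism of the complements of the tubular neighborhoods covering the identity on $Q$, twisted by $\sigma$, so $\xi'\cong\xi^\sigma$; and $f$ carries the characteristic submanifold fixed by $v$ to the one fixed by $\sigma(v)$, so $\lambda'=\sigma\circ\lambda$. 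Hence the two pairs lie in the same diagonal orbit, and the map is well-defined on equivalence classes.

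Next, surjectivity and injectivity. Surjectivity is immediate: given any $(\xi,\lambda)$, the construction $M(\xi,\lambda)$ of Section~\ref{sect:chara} produces a locally standard 2-torus manifold over $Q$ whose associated pair is $(\xi,\lambda)$ itself, so every orbit is hit. For injectivity, suppose $M$ and $M'$ have associated pairs lying in the same diagonal $\GL(n,\Z_2)$-orbit, i.e. $\xi'=\xi^\sigma$ and $\lambda'=\sigma\circ\lambda$ for some $\sigma$. By Lemma~\ref{form} we may replace $M$ by $M(\xi,\lambda)$ and $M'$ by $M(\xi',\lambda')=M(\xi^\sigma,\sigma\circ\lambda)$. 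One then checks directly from the definition of the equivalence relation $\sim$ used to construct $M(\xi,\lambda)$ that the identity map $E\to E$, regarded as a map from the total space of $\xi$ to the total space of $\xi^\sigma$, descends to a homeomorphism $M(\xi,\lambda)\to M(\xi^\sigma,\sigma\circ\lambda)$: the key point is that the defining relation uses $G_F$, the subgroup generated by the $\lambda(F_i)$, and $\sigma$ carries the $G_F$ for $\lambda$ isomorphically onto the corresponding subgroup for $\sigma\circ\lambda$, so the two equivalence relations on $E$ literally coincide after applying $\sigma$ to the group. This descended map is equivariant with respect to $\sigma$ and covers the identity on $Q$, so $M(\xi,\lambda)$ and $M(\xi',\lambda')$ are equivalent, proving injectivity.

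The main obstacle is the bookkeeping in the injectivity step: one must be careful that ``the principal bundle associated to $M(\xi,\lambda)$ is $\xi$'' and ``the characteristic function associated to $M(\xi,\lambda)$ is $\lambda$'' hold on the nose, so that Lemma~\ref{form} can be invoked to reduce an arbitrary pair of equivalent manifolds to the canonical models. This is essentially contained in the discussion of Section~\ref{sect:chara}, but it is the only place where anything beyond formal manipulation is needed; once the canonical models are in hand, the comparison of the relations $\sim$ under the group automorphism $\sigma$ is routine.
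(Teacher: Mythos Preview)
Your proposal is correct and follows exactly the approach of the paper: the paper does not even give a separate proof of this proposition, but simply records (in the paragraphs immediately preceding it) that Lemma~\ref{form} reduces everything to the models $M(\xi,\lambda)$, that $M(\xi',\lambda')$ is equivalent to $M(\xi,\lambda)$ iff $\xi'=\xi^\sigma$ and $\lambda'=\sigma\circ\lambda$ for some $\sigma$, and that $\mathcal P(Q)\cong H^1(Q;(\Z_2)^n)$ with $\xi\mapsto\xi^\sigma$ corresponding to the coefficient action. Your write-up is just a careful unpacking of what the paper declares ``obvious''; the injectivity check via the identity map $E\to E$ descending through the two equivalence relations is the natural way to make that claim explicit.
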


The action of $\GL(n,\Z_2)$ on $H^1(Q;(\Z_2)^n)\times \Lambda(Q)$ is
free when $Q$ has a vertex by the following lemma.

\begin{lem} \label{free}
If $Q$ has a vertex, then the  action of $\GL(n,{\Bbb Z}_2)$ on
$\Lambda(Q)$ is free and
$|\Lambda(Q)|=|\GL(n,\Z_2)\backslash\Lambda(Q)|\prod_{k=1}^n(2^n-2^{k-1})$.
\end{lem}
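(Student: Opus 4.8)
The plan is to show that the action of $\GL(n,\Z_2)$ on $\Lambda(Q)$ is free; the counting formula then follows immediately from the orbit–counting principle, since every orbit has size $|\GL(n,\Z_2)| = \prod_{k=1}^n(2^n - 2^{k-1})$. So the heart of the matter is freeness.

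First I would fix a vertex $v$ of $Q$. Since $Q$ is an $n$-dimensional nice manifold with corners, $v$ is a connected component of the intersection of exactly $n$ facets $F_1,\dots,F_n$ of $Q$. (This uses that locally near a vertex $Q$ looks like the positive cone $\R_{\ge 0}^n$, so exactly $n$ facets meet there.) Now suppose $\sigma\in\GL(n,\Z_2)$ satisfies $\sigma\circ\lambda = \lambda$ for some $\lambda\in\Lambda(Q)$. Applying this to $F_1,\dots,F_n$ gives $\sigma(\lambda(F_i)) = \lambda(F_i)$ for $i=1,\dots,n$. By the linearly independent condition in the definition of a characteristic function, the vectors $\lambda(F_1),\dots,\lambda(F_n)$ are linearly independent in the $n$-dimensional $\Z_2$-vector space $(\Z_2)^n$, hence form a basis. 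A linear automorphism fixing every vector of a basis is the identity, so $\sigma = \mathrm{id}$. This shows the stabilizer of any $\lambda\in\Lambda(Q)$ is trivial, i.e. the action is free.

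Having established freeness, I would finish by the standard orbit decomposition: $\Lambda(Q)$ is partitioned into $\GL(n,\Z_2)$-orbits, each of which — being a free orbit — has cardinality equal to $|\GL(n,\Z_2)|$. The number of orbits is by definition $|\GL(n,\Z_2)\backslash\Lambda(Q)|$, so
\[
|\Lambda(Q)| = |\GL(n,\Z_2)\backslash\Lambda(Q)|\cdot |\GL(n,\Z_2)|.
\]
It remains only to recall the elementary count $|\GL(n,\Z_2)| = \prod_{k=1}^n(2^n - 2^{k-1})$: an invertible matrix is built column by column, the $k$-th column being any vector outside the span of the previous $k-1$ columns, and that span has $2^{k-1}$ elements. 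Substituting gives the claimed formula.

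The only genuine subtlety — and the step I would be most careful about — is the first one: ensuring that a vertex really is the intersection of exactly $n$ facets, so that the associated characteristic vectors span all of $(\Z_2)^n$ rather than a proper subspace. This is exactly where the hypothesis "$Q$ has a vertex" is used, and it rests on the local model of a manifold with corners near a $0$-dimensional face. Everything after that is purely formal linear algebra and counting. Note also that the lemma only asserts freeness of the action on $\Lambda(Q)$ itself; freeness on the product $H^1(Q;(\Z_2)^n)\times\Lambda(Q)$ with the diagonal action is then immediate, since a trivial stabilizer on one factor forces a trivial stabilizer on the product.
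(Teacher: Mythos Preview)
Your proof is correct and follows essentially the same approach as the paper: fix a vertex, use that the $n$ facets meeting there yield a basis of $(\Z_2)^n$ under $\lambda$, and conclude that any $\sigma$ fixing $\lambda$ must be the identity; the counting formula then follows from freeness and the standard order of $\GL(n,\Z_2)$. Your write-up is somewhat more detailed than the paper's (e.g.\ you spell out the orbit decomposition and the column-by-column count of $|\GL(n,\Z_2)|$), but the argument is the same.
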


\begin{proof}
Suppose that $\lambda=\sigma\circ\lambda$ for some
$\lambda\in\Lambda(Q)$ and $\sigma\in\GL(n,\Z_2)$.  Take a vertex of
$Q$ and let $F_1,\dots,F_n$ be the facets of $Q$ meeting at the
vertex.  Then
$$(\lambda(F_1),\dots,\lambda(F_n))=\sigma(\lambda(F_1),\dots,\lambda(F_n)).$$
Since the matrix $(\lambda(F_1),\dots,\lambda(F_n))$ is
non-singular, $\sigma$ is the identity matrix.  This proves the
former statement in the lemma. Then the latter statement follows
from the well-known fact that
$|\GL(n,\Z_2)|=\prod_{k=1}^n(2^n-2^{k-1})$, see \cite{al-be95}.
\end{proof}

Lemma~\ref{free} is also helpful to count the number of elements in
$\Lambda(Q)$.  Here is an example.

\begin{exam} (The number of characteristic functions on a prism.)
There exist seven combinatorially inequivalent 3-polytopes with six
vertices (see \cite[Theorem 6.7]{e}) and only one of them is simple,
which is a prism $P^3$.

Let us count the number of characteristic functions on $P^3$. $P^3$
has five facets, consisting of three  square facets and two
triangular facets.  We denote the three square facets by $F_1, F_2,
F_4$, and the two triangular facets by $F_3,F_5$. The facets
$F_1,F_2,F_3$ intersect at a vertex and we may assume that a
characteristic function $\lambda$ on $P^3$ takes the standard basis
$\text{\bf e}_1, \text{\bf e}_2, \text{\bf e}_3$ of $(\Z_2)^3$ on
$F_1, F_2, F_3$ respectively through the action of $\GL(3,\Z_2)$ on
$(\Z_2)^3$. The characteristic function $\lambda$ must satisfy the
linearly independent condition at each vertex of $P^3$.  This
requires that the values of $\lambda$ on the remaining facets $F_4,
F_5$ must be as follows:
\[
(\lambda(F_4),\lambda(F_5))=(\text{\bf e}_1+\text{\bf e}_2+\text{\bf
e}_3,\text{\bf e}_3) \quad\text{or}\quad (\text{\bf e}_1+\text{\bf
e}_2, a\text{\bf e}_1+b\text{\bf e}_2+\text{\bf e}_3) \] where
$a,b\in\Z_2$. Therefore
$$|\GL(3,{\Bbb Z}_2)\backslash\Lambda(P^3)|=5\quad\text{and}\quad
\vert \Lambda(P^3)\vert=5\vert\GL(3,{\Bbb Z}_2)\vert=840$$ by
Lemma~\ref{free}.
\end{exam}

\vskip .2cm

Another natural equivalence relation among locally standard 2-torus
manifolds is equivariant {\diffeomorphism}. An {\em automorphism} of
$Q$ is a self-{\diffeomorphism} of $Q$ as a manifold with corners,
and we denote the group of automorphisms of $Q$ by $\Aut(Q)$.
Similarly, an {\em automorphism} of ${\mathcal{F}}(Q)$ is a
bijection from ${\mathcal{F}}(Q)$ to itself which preserves the
poset structure of ${\mathcal{F}}(Q)$ defined by inclusions of
faces, and we denote the group of automorphisms of
${\mathcal{F}}(Q)$ by $\Aut({\mathcal{F}}(Q))$. An automorphism of
$Q$ induces an automorphism of ${\mathcal{F}}(Q)$, so we have a
natural homomorphism
\begin{equation} \label{Phi}
\Phi\colon \Aut(Q)\to \Aut({\mathcal{F}}(Q)).
\end{equation}
We note that $\Aut(\mathcal{F}(Q))$ acts on $\Lambda(Q)$ by sending
$\lambda\in \Lambda(Q)$ to $\lambda\circ h$ for $h\in
\Aut(\mathcal{F}(Q))$.

\vskip .2cm

\begin{lem} \label{Gdiff}
$M(\xi,\lambda)$ is equivariantly {\diffeomorphic} to
$M(\xi',\lambda')$ if and only if there is an $h\in \Aut(Q)$ such
that $\lambda'=\lambda\circ \Phi(h)$ and $h^*(\xi')=\xi$ in
$\mathcal{P}(Q)$, where $h^*(\xi')$ denotes the bundle induced from
$\xi'$ by $h$.
\end{lem}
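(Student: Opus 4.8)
The plan is to prove both directions by tracking how an equivariant homeomorphism interacts with the two pieces of data attached to a locally standard $2$-torus manifold, using Lemma~\ref{form} as the bridge between abstract manifolds and the models $M(\xi,\lambda)$. For the ``if'' direction, suppose $h\in\Aut(Q)$ satisfies $\lambda'=\lambda\circ\Phi(h)$ and $h^*(\xi')=\xi$. The equality $h^*(\xi')=\xi$ gives a bundle map $\tilde h\colon E\to E'$ covering $h$ which is equivariant for the $(\Z_2)^n$-action (this is exactly what it means for $h^*\xi'$ and $\xi$ to be isomorphic as principal bundles over $Q$). I would then check that $\tilde h$ descends to the quotients $M(\xi,\lambda)=E/\!\sim$ and $M(\xi',\lambda')=E'/\!\sim'$. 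The point is that the equivalence relation $\sim$ on $E$ at a point lying in the relative interior of a face $F$ collapses the $G_F$-orbit, where $G_F$ is generated by the $\lambda$-values of the facets containing $F$; since $h$ carries $F$ to the face $h(F)$ with $\Phi(h)$ permuting the facets accordingly, and $\lambda'=\lambda\circ\Phi(h)$, the subgroup $G_F$ for $(Q,\lambda)$ matches the subgroup $G_{h(F)}$ for $(Q,\lambda')$. Hence $u_1\sim u_2$ if and only if $\tilde h(u_1)\sim'\tilde h(u_2)$, so $\tilde h$ induces an equivariant homeomorphism $M(\xi,\lambda)\to M(\xi',\lambda')$.

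For the ``only if'' direction, suppose $f\colon M(\xi,\lambda)\to M(\xi',\lambda')$ is an equivariant homeomorphism. Since $f$ is equivariant it carries orbits to orbits and therefore induces a homeomorphism $h$ on orbit spaces; because $f$ carries the union of characteristic submanifolds (the non-free locus) of the source onto that of the target, and these submanifolds map to facets, $h$ is a homeomorphism of $Q$ as a manifold with corners, i.e.\ $h\in\Aut(Q)$, and its induced permutation of facets is $\Phi(h)$. Next, for a facet $F$ of the target, the characteristic submanifold $\pi'^{-1}(F)$ is fixed pointwise by the subgroup generated by $\lambda'(F)$; pulling back through the equivariant $f$, the corresponding characteristic submanifold over the facet $\Phi(h)(F)$ in the source is fixed by the subgroup generated by $\lambda(\Phi(h)(F))$. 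Equivariance forces these isotropy subgroups to coincide, giving $\lambda'(F)=\lambda(\Phi(h)(F))$, that is, $\lambda'=\lambda\circ\Phi(h)$. Finally, restricting $f$ to the complement of invariant tubular neighborhoods of the characteristic submanifolds — on which both actions are free with orbit space $Q$ — yields an equivariant homeomorphism of the associated principal bundles over $h$, which is precisely an isomorphism $\xi\cong h^*(\xi')$ in $\mathcal P(Q)$.

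The step I expect to require the most care is the passage from the abstract manifold statement to the models: a priori $f$ is only a homeomorphism between $M(\xi,\lambda)$ and $M(\xi',\lambda')$, and I want to conclude the bundle-level statement $h^*(\xi')=\xi$ rather than just an equality of Stiefel--Whitney-type invariants. This is where one must argue that $f$ restricted to the free part, after choosing the invariant tubular neighborhoods compatibly (which one may do since $f$ matches up characteristic submanifolds), really is a bundle isomorphism covering $h$ and not merely a fiberwise homeomorphism — so that it represents an honest isomorphism of principal $(\Z_2)^n$-bundles. Here one uses that an equivariant homeomorphism between free $(\Z_2)^n$-spaces inducing $h$ on quotients is automatically a morphism of principal bundles, together with the standard fact that the isomorphism class of $\xi$ in $\mathcal P(Q)\cong H^1(Q;(\Z_2)^n)$ is unchanged under a self-homeomorphism of $Q$ precisely up to the pullback $h^*$. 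The remaining verifications — that $G_F$ is intrinsic to $\lambda$ and $F$, that the relations $\sim$ are transported correctly, and that isotropy subgroups match under equivariance — are routine bookkeeping once the setup is in place.
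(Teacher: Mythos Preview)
Your proposal is correct and follows the same approach as the paper's proof: for the ``if'' direction you lift $h$ to a bundle map and verify it descends through the quotient relations $\sim$ using $\lambda'=\lambda\circ\Phi(h)$, and for the ``only if'' direction you read off $h$, $\lambda'=\lambda\circ\Phi(h)$, and $h^*(\xi')=\xi$ from the induced map on orbit spaces, isotropy data, and the free part respectively. The paper's own proof is considerably more terse (it essentially says ``it is easy to see'' for the forward direction), so your expanded treatment---including the careful point about why the restriction to the free locus yields an honest principal bundle isomorphism---is a faithful elaboration rather than a different argument.
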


\begin{proof} If $M(\xi,\lambda)$ is equivariantly {\diffeomorphic} to
$M(\xi',\lambda')$, then there is an equivariant \diffeomorphism\
$H\colon M(\xi', \lambda')\to M(\xi,\lambda)$ and it is easy to see
that the automorphism of $Q$ induced from $H$ is the desired $h$ in
the theorem.

Conversely, suppose that there is an $h\in \Lambda(Q)$ such that
$\lambda'=\lambda\circ \Phi(h)$ and $\xi'=h^*(\xi)$ in
$\mathcal{P}(Q)$. Then there is a bundle map $\hat h\colon \xi'\to
\xi$ which covers $h$, and $\hat h$ descends to a map $H$ from
$M(\xi',\lambda')$ to $M(\xi,\lambda)$ because
$\lambda'=\lambda\circ \Phi(h)$.  It is not difficult to see that
$H$ is an equivariant {\diffeomorphism}.
\end{proof}

$\Aut(Q)$ naturally acts on $H^1(Q;(\Z_2)^n)$ and the canonical
bijection between $\mathcal{P}(Q)$ and $H^1(Q;(\Z_2)^n)$ is
equivariant with respect to the actions of $\Aut(Q)$.

\begin{prop} \label{QGdiff}
The set of equivariant {\diffeomorphism} classes in all locally
standard 2-torus manifolds over $Q$ bijectively corresponds to the
coset
$$(H^1(Q,(\Z_2)^n)\times \Lambda(Q))/\Aut(Q)$$
by the diagonal action of $\Aut(Q)$. If $Q$ is a simple convex
polytope, then the set of equivariant {\diffeomorphism} classes in
all small covers over $Q$ bijectively corresponds to the coset
$\Lambda(Q)/\Aut(\mathcal{F}(Q))$.
\end{prop}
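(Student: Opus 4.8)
The plan is to build both correspondences out of Lemma~\ref{Gdiff}, which already translates equivariant diffeomorphism of the model manifolds into an $\Aut(Q)$-relation on the pair $(\xi,\lambda)$. First I would combine Lemma~\ref{form} with Lemma~\ref{Gdiff}: Lemma~\ref{form} says every locally standard 2-torus manifold over $Q$ is equivariantly diffeomorphic to some $M(\xi,\lambda)$, so the assignment $(\xi,\lambda)\mapsto M(\xi,\lambda)$ from $\mathcal P(Q)\times\Lambda(Q)$ onto the set of equivariant diffeomorphism classes is surjective. Then Lemma~\ref{Gdiff} identifies exactly when two pairs give the same class: $M(\xi,\lambda)\cong M(\xi',\lambda')$ iff there is $h\in\Aut(Q)$ with $\lambda'=\lambda\circ\Phi(h)$ and $h^*(\xi')=\xi$. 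Using the equivariant bijection between $\mathcal P(Q)$ and $H^1(Q;(\Z_2)^n)$ mentioned just before the proposition (so that $h^*$ becomes the $\Aut(Q)$-action on cohomology), this says precisely that $(\xi,\lambda)$ and $(\xi',\lambda')$ lie in the same orbit of the diagonal $\Aut(Q)$-action on $H^1(Q;(\Z_2)^n)\times\Lambda(Q)$. Hence the fibres of the surjection are exactly the $\Aut(Q)$-orbits, which yields the claimed bijection with $(H^1(Q;(\Z_2)^n)\times\Lambda(Q))/\Aut(Q)$.

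For the second statement, assume $Q$ is a simple convex polytope. Here two simplifications occur. First, every principal $(\Z_2)^n$-bundle over $Q$ is trivial (as recalled in the introduction, since a simple polytope is contractible, $H^1(Q;(\Z_2)^n)=0$), so the $H^1$-factor drops out and a small cover over $Q$ is determined up to equivariant diffeomorphism by $\lambda$ alone together with the $\Aut(Q)$-action. Second, I would argue that for a simple convex polytope the natural homomorphism $\Phi\colon\Aut(Q)\to\Aut(\mathcal F(Q))$ is surjective: any automorphism of the face poset of $Q$ is realized by a self-homeomorphism of $Q$ as a manifold with corners. This is where the argument has real content. One clean way is to note that the face poset of a simple polytope is the opposite of the face lattice of its simplicial dual, so a poset automorphism is a combinatorial automorphism of the boundary complex; one can then coning off and using that the boundary complex is realized on the sphere, isotope the induced sphere homeomorphism to the identity on each face appropriately, or more simply invoke that $Q$ is determined as a manifold with corners by $\mathcal F(Q)$ (cf.\ Davis--Januszkiewicz \cite{dj}), so $\Aut(Q)\to\Aut(\mathcal F(Q))$ is an isomorphism. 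Granting this, the diagonal $\Aut(Q)$-action on $\{0\}\times\Lambda(Q)$ has the same orbits as the $\Aut(\mathcal F(Q))$-action on $\Lambda(Q)$, and the first part of the proposition specializes to the bijection with $\Lambda(Q)/\Aut(\mathcal F(Q))$.

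The routine verifications I would suppress are: that the bijection $\mathcal P(Q)\leftrightarrow H^1(Q;(\Z_2)^n)$ intertwines $h^*$ with the induced action on $H^1$ (this is naturality of the classifying-space description, already asserted in the text), and that "same fibre $=$ same orbit" is exactly the statement of Lemma~\ref{Gdiff} after this translation. The genuine obstacle is the surjectivity (indeed bijectivity) of $\Phi$ for simple convex polytopes: without it one would only get a bijection with $\Lambda(Q)/\Phi(\Aut(Q))$, which is a priori coarser than $\Lambda(Q)/\Aut(\mathcal F(Q))$. So the crux of the proof is a combinatorial-topological realization argument showing that every automorphism of the face poset of a simple polytope comes from a homeomorphism of the polytope preserving its corner structure; everything else is bookkeeping with the two preceding lemmas.
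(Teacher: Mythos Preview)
Your proposal is correct and follows essentially the same route as the paper. You derive the first bijection from Lemma~\ref{form} (surjectivity) together with Lemma~\ref{Gdiff} (fibre description) and the identification $\mathcal{P}(Q)\cong H^1(Q;(\Z_2)^n)$, exactly as the paper does; for the polytope case you correctly isolate the one substantive point, surjectivity of $\Phi\colon\Aut(Q)\to\Aut(\mathcal{F}(Q))$, and your first suggested argument (pass to the simplicial dual $Q^*$, realize a face-poset automorphism as a simplicial automorphism of $\partial Q^*$, cone it off, and transport back to $Q$) is precisely the paper's proof.
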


\begin{proof}
The former statement in the proposition follows from
Lemma~\ref{Gdiff}. If $Q$ is a simple polytope, then
$H^1(Q;(\Z_2)^n)=0$.  Therefore, the latter statement in the
proposition follows if we prove that the map $\Phi$ in (\ref{Phi})
is surjective when $Q$ is a simple convex polytope.

A simple polytope $Q$ has a simplicial polytope $Q^*$ as its dual
and the face poset ${\mathcal F}(Q)$ is same as ${\mathcal F}(Q^*)$
with reversed inclusion relation. Therefore $\Aut({\mathcal
F}(Q))=\Aut({\mathcal F}(Q^*))$. Since $Q^*$ is simplicial, an
element $\varphi$ of $\Aut({\mathcal F}(Q^*))$ is realized by a
simplicial automorphism on the boundary of $Q^*$, so it extends to
an automorphism of $Q^*$.  Since $Q$ is dual to $Q^*$, the
automorphism of $Q^*$ determines a bijection on the vertex set of
$Q$ and hence an automorphism of $Q$ which induces the chosen
$\varphi$.
\end{proof}


Our last equivalence relation is a combination of the previous two
relations. We say that two locally standard 2-torus manifolds $M$
and $M'$ over $Q$ are {\em weakly equivariantly \diffeomorphic} if
there is a {\diffeomorphism} $f\colon M\to M'$ together with
$\sigma\in \GL(n,\Z_2)$ such that $f(gx)=\sigma(g)f(x)$ for any
$g\in (\Z_2)^n$ and $x\in M$.  We note that $f$ induces an
automorphism of $Q$ but it may not be the identity on $Q$. The
observation above shows that $M(\xi,\lambda)$ and $M(\xi',\lambda')$
are weakly equivariantly {\diffeomorphic} if and only if there are
$h\in \Aut(Q)$ and $\sigma\in \GL(n,\Z_2)$ such that
$\xi'=h^*(\xi^\sigma)$ and $\lambda'=\sigma\circ\lambda\circ h$.  It
follows that

\begin{prop}
The set of weakly equivariant {\diffeomorphism} classes in locally
standard 2-torus manifolds over $Q$ bijectively corresponds to the
double coset
\[
\GL(n,\Z_2)\backslash\big(H^1(Q;(\Z_2)^n)\times
\Lambda(Q)\big)/\Aut(Q)
\]
by the diagonal actions of $\Aut(Q)$ and $\GL(n,\Z_2)$. If $Q$ is a
simple convex polytope, then the set of weakly equivariant
{\diffeomorphism} classes in small covers over $Q$ bijectively
corresponds to the double coset
\begin{equation} \label{doub}
\GL(n,\Z_2)\backslash\Lambda(Q)/\Aut(\mathcal{F}(Q)).
\end{equation}
\end{prop}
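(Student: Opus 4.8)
The plan is to run exactly the argument of Propositions~\ref{equicor} and~\ref{QGdiff}, but now letting the coefficient automorphism $\sigma$ and the base automorphism $h$ vary at the same time. First, by Lemma~\ref{form} every locally standard 2-torus manifold over $Q$ is equivariantly {\diffeomorphic}, over the identity of $Q$, to one of the form $M(\xi,\lambda)$ with $\xi\in\mathcal P(Q)$ and $\lambda\in\Lambda(Q)$, so $(\xi,\lambda)\mapsto M(\xi,\lambda)$ is a surjection from $\mathcal P(Q)\times\Lambda(Q)$ onto the set of weakly equivariant {\diffeomorphism} classes. It therefore remains only to say when $(\xi,\lambda)$ and $(\xi',\lambda')$ yield weakly equivariantly {\diffeomorphic} manifolds, and that is precisely the criterion recorded just above the statement: $M(\xi,\lambda)$ and $M(\xi',\lambda')$ are weakly equivariantly {\diffeomorphic} iff there are $h\in\Aut(Q)$ and $\sigma\in\GL(n,\Z_2)$ with $\xi'=h^*(\xi^\sigma)$ and $\lambda'=\sigma\circ\lambda\circ\Phi(h)$.

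Second, I would verify that $(\sigma,h)\cdot(\xi,\lambda)=\big(h^*(\xi^\sigma),\,\sigma\circ\lambda\circ\Phi(h)\big)$ really does define commuting actions of $\GL(n,\Z_2)$ (on the left, on the coefficients) and of $\Aut(Q)$ (on the right, by pullback and precomposition): twisting a bundle's action by $\sigma$ commutes with pulling it back along $h$, so $h^*(\xi^\sigma)=(h^*\xi)^\sigma$, and on characteristic functions post-composition with $\sigma$ commutes with pre-composition with $\Phi(h)$; the action axioms then follow from functoriality of $h\mapsto h^*$ together with $\Phi(h_1h_2)=\Phi(h_1)\Phi(h_2)$. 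Using the canonical $\Aut(Q)$-equivariant identification $\mathcal P(Q)\cong H^1(Q;(\Z_2)^n)$ already noted in the text, under which $\xi\mapsto\xi^\sigma$ becomes the change-of-coefficients map induced by $\sigma$, the criterion says exactly that $(\xi,\lambda)$ and $(\xi',\lambda')$ lie in the same orbit, so the set of weakly equivariant {\diffeomorphism} classes is the orbit space, i.e. the double coset $\GL(n,\Z_2)\backslash\big(H^1(Q;(\Z_2)^n)\times\Lambda(Q)\big)/\Aut(Q)$.

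Third, for the simple-polytope refinement I would specialize: a simple convex polytope is contractible, so $H^1(Q;(\Z_2)^n)=0$ and the first factor drops out, leaving $\GL(n,\Z_2)$ and $\Aut(Q)$ acting on $\Lambda(Q)$ alone. The $\Aut(Q)$-action factors through $\Phi\colon\Aut(Q)\to\Aut(\mathcal F(Q))$, which is surjective by the dual-simplicial-polytope argument already given in the proof of Proposition~\ref{QGdiff}; hence $\Aut(Q)$ and $\Aut(\mathcal F(Q))$ have the same orbits on $\Lambda(Q)$, and since $\GL(n,\Z_2)$ commutes with both, the quotient is $\GL(n,\Z_2)\backslash\Lambda(Q)/\Aut(\mathcal F(Q))$ as claimed.

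The main obstacle is the ingredient that is really being imported rather than re-derived here, namely the ``only if'' half of the weak-equivariance criterion: given a {\diffeomorphism} $f$ and $\sigma\in\GL(n,\Z_2)$ with $f(gx)=\sigma(g)f(x)$, one must extract the induced base automorphism $h\in\Aut(Q)$ and check the bundle identity $\xi'=h^*(\xi^\sigma)$. This requires tracking $f$ on the free part of the manifold — the complement of invariant tubular neighborhoods of the characteristic submanifolds, where the associated principal bundle is read off — and seeing that $f$ carries $\sigma$-twisted local trivializations to trivializations of $\xi'$, while simultaneously sending characteristic submanifolds to characteristic submanifolds in a way compatible with $\sigma$ and $h$, so that $\lambda'=\sigma\circ\lambda\circ\Phi(h)$ falls out. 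Apart from that, the only bookkeeping point worth making explicit is that the two prescribed actions commute, so that the double-coset notation is unambiguous and the equivalence it defines is genuinely transitive.
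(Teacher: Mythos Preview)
Your proposal is correct and follows exactly the paper's approach: the paper itself gives no separate proof but simply records the criterion ``$M(\xi,\lambda)$ and $M(\xi',\lambda')$ are weakly equivariantly {\diffeomorphic} iff there exist $h\in\Aut(Q)$ and $\sigma\in\GL(n,\Z_2)$ with $\xi'=h^*(\xi^\sigma)$ and $\lambda'=\sigma\circ\lambda\circ h$'' and then writes ``It follows that'' before stating the proposition. Your write-up supplies the bookkeeping the paper leaves implicit (commutativity of the two actions, the specialization via $H^1(Q;(\Z_2)^n)=0$ and the surjectivity of $\Phi$ already established in Proposition~\ref{QGdiff}), and you correctly flag that the substantive content being imported is the ``only if'' direction of that criterion.
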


\begin{rem} When $Q$ is a right-angled regular
hyperbolic polytope (such $Q$ is the dodecahedron, the 120-cell or
an $m$-gon with $m\ge 5$), it is shown in \cite[Theorem
3.3]{ga-sc02} that the double coset (\ref{doub}) agrees with the set
of hyperbolic structures in small covers over $Q$.  This together
with Mostow rigidity implies that when $\dim Q\ge 3$, that is, when
$Q$ is the dodecahedron or the 120-cell, the double coset
(\ref{doub}) agrees with the set of homeomorphism classes in small
covers over $Q$ (\cite[Corollary 3.4]{ga-sc02}), i.e., the natural
surjective map from the double coset to the set of homeomorphism
classes in small covers over $Q$ is bijective for such $Q$. However,
this last statement does not hold for an $m$-gon $Q$ with $m\ge 6$
although it holds for $m=3,4,5$, see the remark following
Example~\ref{Cm} in the next section.
\end{rem}

\vskip .3cm

\section{Enumeration of colorings on a circle} \label{sect:color}

When $\dim Q=2$, each boundary component is a circle with at least
two vertices if it has a vertex, and any two non-zero elements in
$({\Bbb Z}_2)^2$ form a basis of $({\Bbb Z}_2)^2$; so a
characteristic function on $Q$ is equivalent to coloring arcs on the
boundary circles with three colors in such a way that any two
adjacent arcs have different colors.

\vskip .2cm

Let $\SA(m)$ be a circle with $m$ $(\ge 2)$ vertices. A coloring on
$\SA(m)$ (with three colors) means to color arcs of $\SA(m)$ in such
a way that any adjacent arcs have different colors. We denote by
$\CO(m)$ the set of all colorings on $\SA(m)$ and set
\[
A(m):=|\CO(m)|.
\]

\begin{lem} \label{A(m)}
$A(m)=2^m+(-1)^m2.$
\end{lem}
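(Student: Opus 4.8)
The plan is to count colorings of the $m$ arcs of $S(m)$ with three colors subject to the constraint that cyclically adjacent arcs receive distinct colors; this is exactly the number of proper $3$-colorings of the cycle graph $C_m$. I would set up a transfer-matrix (equivalently, a linear recursion) argument. Label the arcs $a_1,\dots,a_m$ in cyclic order and let $A(m)$ denote the number of valid colorings. The standard trick is to relate the cyclic count to the count on a path: let $B(m)$ be the number of colorings of a path of $m$ arcs with adjacent arcs differently colored, which is clearly $3\cdot 2^{m-1}$ since the first arc has $3$ choices and each subsequent arc has $2$.

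The key step is the inclusion–exclusion / recursion linking $A(m)$ and $B(m)$. Among the $B(m)=3\cdot 2^{m-1}$ proper colorings of the path $a_1,\dots,a_m$, those that also have $a_1\neq a_m$ are counted by $A(m)$, while those with $a_1=a_m$ are in bijection with proper colorings of the shorter cycle on $a_1,\dots,a_{m-1}$ (identify $a_m$ with $a_1$), giving $A(m-1)$. Hence $B(m)=A(m)+A(m-1)$, i.e.
\[
A(m)+A(m-1)=3\cdot 2^{m-1}.
\]
With the base value $A(2)=3\cdot 2=6$ (two arcs forming a bigon, colored differently: $3\cdot 2$ ways), one then solves this first-order linear recursion. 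The homogeneous solution is $c(-1)^m$ and a particular solution of the form $d\,2^m$ gives $d\,2^m+d\,2^{m-1}=3\cdot 2^{m-1}$, so $d=1$; matching $A(2)=6$ forces $c=2$. Therefore $A(m)=2^m+(-1)^m2$, as claimed. Alternatively, one can invoke the chromatic polynomial of $C_m$, namely $(k-1)^m+(-1)^m(k-1)$, evaluated at $k=3$, but deriving it via the recursion above keeps the argument self-contained.

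I do not anticipate a serious obstacle here: the only point requiring a little care is the bijection in the $a_1=a_m$ case — one must check that a proper path-coloring of $a_1,\dots,a_m$ with $a_1=a_m$ is genuinely the same data as a proper cyclic coloring of $a_1,\dots,a_{m-1}$, including the edge between $a_{m-1}$ and $a_1$, which holds because $a_{m-1}\neq a_m=a_1$ is precisely the missing adjacency constraint. One should also note that the argument needs $m\geq 3$ for the reduction to a strictly smaller cycle, so $A(2)=6$ is handled as the base case directly; then induction via the recursion finishes all $m\geq 2$. A final sanity check against Corollary~\ref{2dimeuler} or small cases ($A(3)=6$, the six ways to $3$-color a triangle; $A(4)=18$) confirms the formula.
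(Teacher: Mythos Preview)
Your proof is correct and follows essentially the same approach as the paper: both reduce to the identity $A(m)+A(m-1)=3\cdot 2^{m-1}$ via the path-versus-cycle comparison, with the same bijection in the equal-endpoints case. The only cosmetic difference is that the paper telescopes to obtain a companion relation $A(m)-2A(m-1)=(-1)^m6$ and solves the resulting pair, whereas you solve the single first-order recursion directly via homogeneous plus particular solution.
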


\begin{proof}
Let $L(m)$ be a segment with $m+1$ vertices including the endpoints,
so $L(m)$ has $m$ segments. The number of coloring segments of
$L(m)$ with three colors in such a way that any adjacent segments
have different colors is $3\cdot 2^{m-1}$. If the two end segments
have different colors, then it produces a coloring on $\SA(m)$ by
gluing the end points of $L(m)$. If the two end segments have the
same color, then it produces a coloring on $\SA(m-1)$ by gluing the
end segments of $L(m)$. Thus, we have that
\begin{equation} \label{Am1}
A(m)+A(m-1)=3\cdot 2^{m-1}.
\end{equation}
It follows that
\[
A(m)-2A(m-1)=-(A(m-1)-2A(m-2))=\cdots=(-1)^{m-3}(A(3)-2A(2))
\]
and a simple observation shows that $A(3)=A(2)=6$, so
\begin{equation} \label{Am2}
A(m)-2A(m-1)=(-1)^m6.
\end{equation}
The lemma then follows from (\ref{Am1}) and (\ref{Am2}).
\end{proof}

\bigskip

We think of $\P(m)$ as the unit circle of $\C$ with $m$ vertices
$e^{2\pi k/m}$ $(k=0,1,\dots,m-1)$.  Let ${\frak D}_m$ be the
dihedral group of order $2m$ consisting of $m$ rotations of $\C$ by
angles $2\pi k/m$ $(k=0,1,\dots,m-1)$ and $m$ reflections with
respect to lines in $\C$ obtained by rotating the real axis by
angles $\pi k/m$ $(k=0,1,\dots,m-1)$. Then the action of ${\frak
D}_m$ on $\P(m)$ preserves the vertices so that ${\frak D}_m$ acts
on the set $\CO(m)$. With this understood we have

\vskip .2cm

\begin{thm} \label{B(m)}
Let $\varphi$ denote the Euler's totient function, that is,
$\varphi(1)=1$ and $\varphi(N)$ for a positive integer $N(\geq 2)$
is the number of positive integers both less than $N$ and coprime to
$N$. Then
\[
\vert\CO(m)/{\frak D}_m\vert=\frac{1}{2m}\Big(\sum_{2\leq
d|m}\varphi(m/d)A(d)+\frac{1+(-1)^m}{2}\cdot 3\cdot 2^{m/2}\cdot
\frac{m}{2}\Big).
\]
\end{thm}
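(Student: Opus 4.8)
The plan is to apply Burnside's lemma (the Cauchy--Frobenius formula) to the action of $\frak{D}_m$ on the finite set $\CO(m)$, so that
\[
\vert\CO(m)/{\frak D}_m\vert=\frac{1}{\vert{\frak D}_m\vert}\sum_{g\in{\frak D}_m}\vert\CO(m)^g\vert=\frac{1}{2m}\sum_{g\in{\frak D}_m}\vert\CO(m)^g\vert,
\]
where $\CO(m)^g$ is the set of colorings fixed by $g$. Since ${\frak D}_m$ splits into $m$ rotations and $m$ reflections, I would compute the two contributions separately and match them to the two terms inside the parentheses of the claimed formula.

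For the rotations, let $r_j$ be rotation by $2\pi j/m$, with $j\in\{0,1,\dots,m-1\}$; this has order $m/\gcd(j,m)$ and partitions the $m$ arcs into $\gcd(j,m)$ orbits each of size $m/\gcd(j,m)$. A coloring is fixed by $r_j$ exactly when it is constant on these orbits, i.e., it descends to a coloring of the quotient circle $\SA(\gcd(j,m))$ — here I must check the adjacency condition transfers correctly, and that when $\gcd(j,m)=1$ the quotient "circle with one vertex" admits no valid coloring (an arc would be adjacent to itself), giving $A(1)=0$, consistent with the summation starting at $d\ge 2$. Hence $\vert\CO(m)^{r_j}\vert=A(\gcd(j,m))$, and grouping the $j$'s by the value $d=\gcd(j,m)$ — there are exactly $\varphi(m/d)$ such $j$ for each divisor $d\mid m$ — yields $\sum_{j}\vert\CO(m)^{r_j}\vert=\sum_{d\mid m}\varphi(m/d)A(d)=\sum_{2\le d\mid m}\varphi(m/d)A(d)$. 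This matches the first term.

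For the reflections, the axis either passes through two opposite vertices or through two opposite midpoints of arcs, and the count depends on the parity of $m$. When $m$ is odd, each reflection axis passes through one vertex and the midpoint of the opposite arc; a fixed coloring is determined by the arcs on one side, and a short argument (reflecting across an arc forces its two neighbors to agree, then propagating, one ends up counting colorings of a path with a symmetry constraint) gives a count which, summed over the $m$ reflections, I expect to contribute nothing beyond what makes the parity factor $\tfrac{1+(-1)^m}{2}$ vanish — so the second term is absent for odd $m$, as the formula predicts. When $m$ is even, there are two conjugacy classes of reflections ($m/2$ through vertex pairs, $m/2$ through arc-midpoint pairs); the vertex-type reflections fix a coloring iff it is symmetric, which amounts to freely choosing $m/2$ arcs on a path subject only to the walls' adjacency constraint, and the midpoint-type reflections similarly, together producing $m$ reflections each contributing $3\cdot 2^{m/2-1}$ fixed colorings (I need to verify the exact exponent by carefully tracking the "free" arcs and the pinned adjacencies at the ends of the fundamental domain), so their total is $m\cdot 3\cdot 2^{m/2-1}=\tfrac{1+(-1)^m}{2}\cdot 3\cdot 2^{m/2}\cdot\tfrac{m}{2}$, matching the second term.

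The main obstacle will be the reflection count: getting the exponent of $2$ exactly right in both the vertex-axis and midpoint-axis cases, and confirming that the odd-$m$ reflection contributions really are captured by (absorbed into, or cancel against) the rotation sum rather than adding a spurious term — in other words, verifying that the fixed-point counts for reflections when $m$ is odd do not produce an extra contribution outside the stated formula. Once the per-element fixed-point counts $A(\gcd(j,m))$ for rotations and the parity-dependent $3\cdot 2^{m/2-1}$ for reflections are pinned down, assembling them via Burnside and using $\sum_{d\mid m}\varphi(d)=m$ where needed gives the result directly.
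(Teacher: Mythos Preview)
Your overall approach---Burnside, splitting into rotations and reflections, and identifying $\vert\CO(m)^{r_j}\vert=A(\gcd(j,m))$---matches the paper exactly, and your rotation computation is correct.

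The reflection analysis, however, contains a concrete error. You claim that when $m$ is even, \emph{both} conjugacy classes of reflections contribute $3\cdot 2^{m/2-1}$ fixed colorings each. This is false. If the axis of a reflection passes through a vertex $v$, then the two arcs meeting at $v$ are exchanged by the reflection; a fixed coloring would force them to have the same color, contradicting the adjacency constraint. Hence any reflection whose axis hits a vertex fixes \emph{no} coloring. This immediately disposes of all reflections when $m$ is odd (every axis passes through a vertex), and kills the $m/2$ vertex-type reflections when $m$ is even. Only the $m/2$ midpoint-type reflections survive when $m$ is even, and for those the fundamental domain is a path of $m/2+1$ arcs with no extra constraint at the endpoints, giving $3\cdot 2^{m/2}$ fixed colorings each---not $3\cdot 2^{m/2-1}$. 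The paper records this as
\[
\vert\CO(m)^{a^kb}\vert=\begin{cases} 3\cdot 2^{m/2} & \text{$m$ even, $k$ odd},\\ 0 & \text{otherwise.}\end{cases}
\]
Your total $m\cdot 3\cdot 2^{m/2-1}$ for even $m$ happens to be right, but only because $\tfrac{m}{2}\cdot 3\cdot 2^{m/2}$ equals it; the per-element counts you assert are wrong, and your hedged treatment of the odd case (``I expect to contribute nothing'') would not go through as written without the vertex-on-axis observation above.
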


\begin{proof}
The famous Burnside Lemma or Cauchy-Frobenius Lemma (see
\cite{al-be95}) says that if $G$ is a finite group and $X$ is a
finite $G$-set, then
\[
|X/G|=\frac{1}{|G|}\sum_{g\in G}|X^g|
\]
where $X^g$ denotes the set of $g$-fixed points in $X$. We apply
this formula to our ${\frak D}_m$-set $\Lambda(m)$.  Let $a\in
{\frak D}_m$ be the rotation by angle $2\pi/m$ and $b\in {\frak
D}_m$ be the reflection with respect to the real axis. Then we have
\begin{equation} \label{eq1}
\vert\CO(m)/{\frak
D}_m\vert=\frac{1}{2m}\sum_{k=0}^{m-1}\big(|\CO(m)^{a^k}|
+|\CO(m)^{a^k b}|\big).
\end{equation}
Here, if $d$ is the greatest common divisor of $k$ and $m$, then
$\CO(m)^{a^k}=\CO(m)^{a^d}$ because the subgroup generated by $a^k$
is same as that by $a^d$. Since $\CO(m)^{a^d}=\CO(d)$ and $\CO(1)$
is empty, we have
\begin{equation} \label{eq2}
\sum_{k=0}^{m-1}|\CO(m)^{a^k}|=\sum_{2\leq d\mid m}\varphi(m/d)A(d).
\end{equation}
On the other hand, since $a^kb$ is a reflection with respect to the
line in $\C$ obtained by rotating the real axis by angle $\pi k/m$,
we have
\begin{equation} \label{eq3}
|\CO(m)^{a^kb}|=\begin{cases} 3\cdot 2^{{m/2}}
\quad&\text{when $m$ is even and $k$ is odd,}\\
0 \quad&\text{otherwise.}
\end{cases}
\end{equation}
Putting (\ref{eq2}) and (\ref{eq3}) into (\ref{eq1}), we obtain the
formula in the theorem.
\end{proof}

\begin{exam}
As is well known, $\varphi(p^n)=p^{n-1}(p-1)$ for any prime number
$p$ and positive integer $n$, and $\varphi(ab)=\varphi(a)\varphi(b)$
for relatively prime positive integers $a$ and $b$. We set
$$B(m):=\vert\CO(m)/{\frak D}_m\vert.$$
Using the formula in Theorem~\ref{B(m)} together with
Lemma~\ref{A(m)}, one finds that
\[
\begin{split}
&B(2)=3, \quad B(3)=1, \quad B(4)=6, \quad B(5)=3, \quad B(6)=13,\\
&B(7)=9, \quad B(8)=30, \quad B(9)=29, \quad B(10)=78,\\
&B(2^k)=2^{2^k-k-1}+3\cdot 2^{2^{k-1}-2}+\sum_{i=1}^k2^{2^{i-1}-i-1}\\
&B(p^k)=\sum_{i=1}^k\frac{1}{2p^i}(2^{p^i}-2^{p^{i-1}})\\
&B(2p)=\frac{1}{4p}(4^p+(3p+1)2^p+6p-6)
\\
&B(pq)=\frac{1}{2pq}(2^{pq}-2^p-2^q+2)+\frac{1}{2p}(2^p-2)+\frac{1}{2q}
(2^q-2)
\end{split}
\]
where $p$ is an odd prime number and $q$ is another odd prime
number.
\end{exam}

\begin{rem}
The same argument as above works for coloring $\P(m)$ with $s$
colors. In this case the identity in Lemma~\ref{A(m)} turns into
\[
A_s(m)=(s-1)^m+(-1)^m(s-1)
\]
and if we denote by $\Lambda_s(m)$ the set of all coloring on $S(m)$
with $s$ colors, then the formula in Theorem~\ref{B(m)} turns into
\[
|\Lambda_s(m)/{\frak D}_m|= \frac{1}{2m}\Big(\sum_{2\leq
d|m}\varphi(m/d)A_s(d)+ \frac{1+(-1)^m}{2}\cdot s\cdot
(s-1)^{m/2}\cdot \frac{m}{2}\Big).
\]
\end{rem}

\vskip .2cm

The computation of $|\GL(2,\Z_2)\backslash \Lambda(m)/{\frak D}_m|$
can be done in a similar fashion to the above but is rather
complicated. We note that the action of $\GL(2,\Z_2)$ on
$\Lambda(m)$ is permutation of the 3 colors used to color $S(m)$.
$\GL(2,\Z_2)$ consists of 6 elements and three of them are of order
2 and two of them is of order 3.

\begin{thm} \label{C(m)}
Let $\alpha$ and $\beta$ be the functions defined as follows:
\[
\begin{split}
&\alpha(1)=1,\ \alpha(2)=3,\ \alpha(3)=2,\ \alpha(6)=4,\\
&\beta(1)=0,\ \beta(2)=2,\ \beta(3)=2,\ \beta(6)=4.
\end{split}
\]
Then $|\GL(2,\Z_2)\backslash \Lambda(m)/{\frak D}_m|$ is given by
\[
\frac{1}{2m}\Big[\sum_{d|m}\Big\{\varphi(m/d)\cdot\frac{1}{6}\Big(
\alpha\big((m/d,6)\big)A(d)+\beta\big((m/d,6)\big)A(d-1)\Big)\Big\}+E(m)\Big]
\]
where $(m/d,6)$ denotes the greatest common divisor of $m/d$ and $6$,
$A(q)=2^q+(-1)^q2$ as before, and
\[
E(m)=\begin{cases} \frac{m}{6}A(\frac{m+1}{2}) \quad&\text{if $m$ is odd},\\
m\cdot 2^{m/2-1}\quad&\text{if $m$ is even.}
\end{cases}
\]
\end{thm}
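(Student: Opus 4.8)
The plan is to apply the Burnside (Cauchy--Frobenius) Lemma to the group $G=\GL(2,\Z_2)\times{\frak D}_m$ of order $12m$, acting on $\Lambda(m)$ by permuting the three colours (the $\GL(2,\Z_2)$ factor) and by the geometric symmetries of $S(m)$ (the ${\frak D}_m$ factor); these two actions commute, and $\GL(2,\Z_2)\backslash\Lambda(m)/{\frak D}_m=\Lambda(m)/G$. Thus, in the notation of the proof of Theorem~\ref{B(m)},
\[
\bigl|\GL(2,\Z_2)\backslash\Lambda(m)/{\frak D}_m\bigr|
=\frac{1}{12m}\sum_{\rho\in\GL(2,\Z_2)}\Bigl(\sum_{k=0}^{m-1}\bigl|\Lambda(m)^{(\rho,a^{k})}\bigr|+\sum_{k=0}^{m-1}\bigl|\Lambda(m)^{(\rho,a^{k}b)}\bigr|\Bigr).
\]
It therefore remains to compute $|\Lambda(m)^{(\rho,\gamma)}|$ for each $\rho$ and each rotation or reflection $\gamma$, and to regroup the sum into the two pieces of the asserted formula. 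A convenient tool is the transfer matrix $T=J-I$ (with $J$ the $3\times3$ all-ones matrix) of proper $3$-colourings: writing $T^{n}=a_{n}J+b_{n}I$ one has $b_{n}=(-1)^{n}$, $a_{n}+b_{n}=A(n)/3$, $\operatorname{tr}(T^{n})=A(n)$, and for a colour permutation $\sigma$, $\operatorname{tr}(T^{n}P_{\sigma})=3a_{n}+b_{n}\cdot|\{\text{colours fixed by }\sigma\}|$, so this trace is $A(n)$, $2^{n}$, or $2^{n}-(-1)^{n}$ according as $\sigma$ is the identity, a transposition, or a $3$-cycle.

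For the rotation part, fix $k$ and set $d=\gcd(k,m)$, so $\langle a^{k}\rangle$ has $d$ orbits of size $m/d$ on the arcs. A colouring fixed by $(\rho,a^{k})$ is determined by its values $c_{0},\dots,c_{d-1}$ on $d$ consecutive representative arcs, and unwinding the relations shows that such colourings correspond exactly to proper colourings of the path $c_{0},\dots,c_{d-1}$ subject to one extra ``twisted'' edge $c_{d-1}\ne\rho^{-e}(c_{0})$ (where $e$ is the inverse of $k/d$ modulo $m/d$) together with the requirement that each $c_{i}$ be fixed by $\rho^{m/d}$. Hence $|\Lambda(m)^{(\rho,a^{k})}|=0$ unless $\operatorname{ord}(\rho)\mid m/d$ (otherwise $\rho^{m/d}$ fixes no colour, or is a transposition forcing a monochromatic colouring, impossible for $d\ge2$, and the case $d=1$ is checked directly), while for $\operatorname{ord}(\rho)\mid m/d$ one gets $|\Lambda(m)^{(\rho,a^{k})}|=3\cdot2^{d-1}-\operatorname{tr}(T^{d-1}P_{\rho})$ (here $e$ is coprime to $m/d$, hence to $\operatorname{ord}(\rho)$, so $\rho^{e}$ has the same cycle type as $\rho$). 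The number of $\rho$ with $\operatorname{ord}(\rho)\mid m/d$ is $1,4,3$ or $6$ according as $g:=\gcd(m/d,6)$ equals $1,2,3$ or $6$, and summing $3\cdot2^{d-1}-\operatorname{tr}(T^{d-1}P_{\rho})$ over exactly those $\rho$ — repeatedly using $A(d)+A(d-1)=3\cdot2^{d-1}$ from (\ref{Am1}) — collapses to precisely $\alpha(g)A(d)+\beta(g)A(d-1)$. Since there are $\varphi(m/d)$ values of $k$ with $\gcd(k,m)=d$, the total rotation contribution is $\sum_{d\mid m}\varphi(m/d)\bigl(\alpha((m/d,6))A(d)+\beta((m/d,6))A(d-1)\bigr)$, i.e. $6$ times the first term of the theorem.

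For the reflection part, which will produce $6E(m)$, one analyses which arcs and vertices a reflection fixes. When $m$ is odd every reflection fixes one vertex and one arc; when $m$ is even a reflection either fixes two vertices and no arc, or two arcs and no vertex. If a fixed vertex is present, the two arcs meeting there are adjacent and interchanged, so a non-trivial $\rho$ fixing the colouring must be a transposition; if instead the reflection fixes arcs, those arcs must receive a $\rho$-fixed colour, again excluding $3$-cycles and allowing $\rho=e$ only in the arc-midpoint case for $m$ even (where it gives the known count $3\cdot2^{m/2}$). For $m$ odd, a colouring fixed by $(\rho,\gamma)$ with $\rho$ a transposition is a proper colouring of a path on $(m+1)/2$ vertices with one endpoint forced and the other free, of which there are $A((m+1)/2)/3$; summing over the three transpositions and the $m$ reflections gives $mA((m+1)/2)=6E(m)$. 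For $m$ even the $m/2$ vertex reflections each contribute (summed over $\rho$) $A(m/2+1)$ and the $m/2$ arc-midpoint reflections each contribute $3\cdot2^{m/2}+A(m/2)$; adding these and using $A(m/2+1)+A(m/2)=3\cdot2^{m/2}$ gives reflection total $\tfrac{m}{2}\cdot6\cdot2^{m/2}=6E(m)$. Dividing the combined total by $12m$ yields the stated formula.

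The conceptual content is light; the real work is the bookkeeping in the rotation case. The main obstacle will be to pin down the twisted-cycle model correctly — identifying the twist exponent $e$ and checking that the only surviving constraints are the $d-1$ path edges, the single twisted edge, and the $\rho^{m/d}$-fixedness of the colours — and then to verify that the sum over the relevant $\rho$'s, which a priori might depend on more than $\gcd(m/d,6)$, indeed collapses to the two simple functions $\alpha$ and $\beta$; this collapse is exactly where $A(d)+A(d-1)=3\cdot2^{d-1}$ is used repeatedly. The degenerate cases ($d=1$ with $\Lambda(1)=\emptyset$, and the parity interplay between $m/d$ and $\operatorname{ord}(\rho)$) have to be checked separately but are harmless.
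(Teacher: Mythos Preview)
Your proof is correct and follows essentially the same route as the paper's: Burnside, split into rotations and reflections, case analysis according to $\gcd(m/d,6)$ and the parity/type of the reflection, and repeated use of $A(d)+A(d-1)=3\cdot 2^{d-1}$ to collapse the sums. The only cosmetic difference is that the paper applies Burnside to $\mathfrak D_m$ acting on the quotient $\Gamma(m)=\GL(2,\Z_2)\backslash\Lambda(m)$ and then, for each $g\in\mathfrak D_m$, counts those $\lambda$ for which \emph{some} $\sigma$ satisfies $\sigma\circ\lambda=\lambda\circ g$, whereas you apply Burnside to the product group $\GL(2,\Z_2)\times\mathfrak D_m$ on $\Lambda(m)$ and compute fixed points per pair $(\rho,\gamma)$; since the $\GL(2,\Z_2)$-action on $\Lambda(m)$ is free (Lemma~\ref{free}), the two formulations are equivalent term by term. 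Your transfer-matrix device $T=J-I$ is a tidy way to package the path/cycle counts that the paper does by hand, and your verification that the per-$\rho$ sums collapse to $\alpha(g)A(d)+\beta(g)A(d-1)$ is exactly the content of the paper's count of admissible $\sigma$'s. One tiny expository slip: in the odd-$m$ reflection case the ``free'' endpoint is not completely free but is forbidden to take the $\rho$-fixed colour; your stated value $A((m+1)/2)/3$ is nevertheless the correct count for that constraint.
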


\begin{proof}
Applying the Burnside Lemma to our ${\frak D}_m$-set $\Gamma(m):=
\GL(2,\Z_2)\backslash \Lambda(m)$, we have
\begin{equation}\label{dcoset}
\begin{split}
&|\GL(2,\Z_2)\backslash \Lambda(m)/{\frak D}_m|
=\frac{1}{2m}\sum_{g\in {\frak D}_m}|\Gamma(m)^g|\\
=&\frac{1}{2m}\sum_{k=0}^{m-1}\big(|\Gamma(m)^{a^k}|+|\Gamma(m)^{a^kb}|\big)
=\frac{1}{2m}\Big[\sum_{d|m}\varphi(m/d)|\Gamma(m)^{a^d}|+
\sum_{k=0}^{m-1}|\Gamma(m)^{a^kb}|\Big].
\end{split}
\end{equation}
We need to analyze $|\Gamma(m)^{a^d}|$ with $d|m$ and
$|\Gamma(m)^{a^kb}|$.

First we shall treat $|\Gamma(m)^{a^d}|$ with $d|m$. Note that
$\lambda\in \Lambda(m)$ is a representative of $\Gamma(m)^{a^d}$ if
and only if there is $\sigma\in\GL(2,\Z_2)$ such that
\begin{equation}\label{lambda}
\sigma\circ\lambda=\lambda\circ a^d.
\end{equation}
Since $a^d$ is of order $m/d$, the repeated use of (\ref{lambda})
shows that
\begin{equation}\label{order}
\sigma^{m/d}=1.
\end{equation}
The identity (\ref{lambda}) implies that the $\lambda$ satisfying
(\ref{lambda}) can be determined by the coloring restricted to the
union of a consecutive $d$ arcs, say $T$, and it also tells us how
to recover $\lambda$ from the coloring on $T$.

Let $\mu$ be a coloring on $T$. We shall count colorings $\lambda$
on $S(m)$ which are extensions of $\mu$ and satisfy (\ref{lambda})
for some $\sigma\in\GL(2,\Z_2)$. To each $\sigma$ satisfying
(\ref{order}), there is a unique extension to $S(m)$ which satisfies
(\ref{lambda}).  However, the extended one may not be a coloring,
i.e., two arcs meeting at a junction of $T$ and its translations by
rotations $(a^d)^r$ $(r=1,\dots,m/d-1)$ may have the same color. Let
$t$ and $t'$ be the end arcs of $T$ such that the rotation of $t$ by
$a^{d-1}$ is $t'$. (Note: When $d=1$, we understand $t=t'$ and then
the subsequent argument works.) The extended one is a coloring if
and only if
\begin{equation} \label{sigma}
\sigma(\mu(t))\not=\mu(t').
\end{equation}
As is easily checked, the number of $\sigma$ satisfying conditions
(\ref{order}) and (\ref{sigma}) is $\alpha((m/d,6))$ if
$\mu(t)\not=\mu(t')$ and is $\beta((m/d,6))$ if $\mu(t)=\mu(t')$. On
the other hand, the number of $\mu$ with $\mu(t)\not=\mu(t')$ is
$A(d)$ and that with $\mu(t)=\mu(t')$ is $A(d-1)$. It follows that
the number of $\lambda$ satisfying (\ref{lambda}) for some $\sigma$
is $\alpha((m/d,6))A(d)+\beta((m/6,d))A(d-1)$. This proves that
\begin{equation}\label{gamma}
|\Gamma(m)^{a^d}|=\frac{1}{6}\Big(\alpha\big((m/d,6)\big)A(d)+
\beta\big((m/6,d)\big)A(d-1)\Big)
\end{equation}
since the action of $\GL(2,\Z_2)$ on $\Lambda(m)$ is free by
Lemma~\ref{free} and the order of $\GL(2,\Z_2)$ is 6.

Next we shall treat $|\Gamma(m)^{a^kb}|$. The argument is similar to
the above. As before, $\lambda\in \Lambda(m)$ is a representative of
$\Gamma(m)^{a^kb}$ if and only if there is $\sigma\in\GL(2,\Z_2)$
such that
\begin{equation}\label{lambda2}
\sigma\circ\lambda=\lambda\circ a^kb.
\end{equation}
Since $a^kb$ is of order two, the repeated use of (\ref{lambda2})
shows that
\begin{equation}\label{order2}
\sigma^2=1.
\end{equation}

Suppose that $m$ is odd. Then the line fixed by $a^kb$ goes through
a vertex, say $v$, of $S(m)$ and the midpoint of the arc, say $e'$,
of $S(m)$ opposite to the vertex $v$.  Let $H$ be the union of
$(m+1)/2$ consecutive arcs starting from $v$ and ending at $e'$. Let
$e$ be the other end arc of $H$ different from $e'$. The arc $e$ has
$v$ as a vertex. Let $\nu$ be a coloring on $H$ and let
$\sigma\in\GL(2,\Z_2)$ satisfy (\ref{order2}). Then $\nu$ has an
extension to a coloring of $S(m)$ satisfying (\ref{lambda2}) if and
only if
\begin{equation*} \label{beta}
\sigma(\nu(e))\not=\nu(e)
\quad\text{and}\quad\sigma(\nu(e'))=\nu(e').
\end{equation*}
It follows that $\nu(e)$ must be different from $\nu(e')$ and there
is only one $\sigma$ satisfying the two identities above for each
such $\nu$.  Since the number of $\nu$ with $\nu(e)\not=\nu(e')$ is
$A((m+1)/2)$, so is the number of $\lambda\in\Lambda(m)$ satisfying
(\ref{lambda2}) for some $\sigma$. It follows that
$|\Gamma(m)^{a^kb}|=\frac{1}{6}A((m+1)/2)$ and hence
\begin{equation}\label{akbmod}
\sum_{k=0}^{m-1}|\Gamma(m)^{a^kb}|=\frac{m}{6}A((m+1)/2).
\end{equation}

Suppose that $m$ is even and $k$ is odd. Then the line fixed by
$a^kb$ goes through the midpoints of two opposite arcs, say $e$ and
$e'$, of $S(m)$. Let $H$ be the union of consecutive $m/2+1$ arcs
starting from $e$ and ending at $e'$. Let $\nu$ be a coloring on $H$
and let $\sigma\in \GL(2,\Z_2)$ satisfy (\ref{order2}).  Then $\nu$
has an extension to a coloring of $S(m)$ satisfying (\ref{lambda2})
if and only if
$$\sigma(\nu(e))=\nu(e)\quad\text{and}\quad \sigma(\nu(e'))=\nu(e').$$
If $\nu(e)\not=\nu(e')$ then such $\sigma$ must be the identity, and
if $\nu(e)=\nu(e')$ then there are two such $\sigma$ one of which is
the identity. Since the number of $\nu$ with $\nu(e)\not=\nu(e')$ is
$A(m/2+1)$ and that with $\nu(e)=\nu(e')$ is $A(m/2)$, the number of
$\lambda\in \Lambda(m)$ satisfying (\ref{lambda2}) for some $\sigma$
is $A(m/2+1)+2A(m/2)$. It follows that
\begin{equation}\label{akbod}
\sum_{k=0,k:\text{odd}}^{m-1}|\Gamma(m)^{a^kb}|=\frac{m}{12}
\big(A(m/2+1)+2A(m/2)\big).
\end{equation}

Suppose that $m$ is even and $k$ is even. Then the line fixed by
$a^kb$ goes through two opposite vertices, say $v$ and $v'$, of
$S(m)$. Let $H$ be the union of consecutive $m/2$ arcs starting from
$v$ and ending at $v'$. Let $e$ and $e'$ be the end arcs of $H$
which respectively have $v$ and $v'$ as a vertex. Let $\nu$ be a
coloring on $H$ and let $\sigma\in \GL(2,\Z_2)$ satisfy
(\ref{order2}). Then $\nu$ has an extension to a coloring of $S(m)$
satisfying (\ref{lambda2}) if and only if
$$\sigma(\nu(e))\not=\nu(e)\quad\text{and}\quad
\sigma(\nu(e'))\not=\nu(e').$$ If $\nu(e)\not=\nu(e')$ then there is
only one such $\sigma$, and if $\nu(e)=\nu(e')$ then there are two
such $\sigma$. Since the number of $\nu$ with $\nu(e)\not=\nu(e')$
is $A(m/2)$ and that with $\nu(e)=\nu(e')$ is $A(m/2-1)$, the number
of $\lambda\in \Lambda(m)$ satisfying (\ref{lambda2}) for some
$\sigma$ is $A(m/2)+2A(m/2-1)$. It follows that
\begin{equation}\label{akbev}
\sum_{k=0, k:\text{even}}^{m-1}|\Gamma(m)^{a^kb}|=\frac{m}{12}
\big(A(m/2)+2A(m/2-1)\big).
\end{equation}

Thus, when $m$ is even, it follows from (\ref{akbod}) and
(\ref{akbev}) that
\begin{equation}\label{akbmev}
\sum_{k=0}^{m-1}|\Gamma(m)^{a^kb}|=\frac{m}{12}
\big(A(m/2+1)+3A(m/2)+2A(m/2-1)\big)=m\cdot 2^{m/2-1}
\end{equation}
where we used $A(q)=2^q+(-1)^q2$ at the latter identity.

The theorem now follows from (\ref{dcoset}), (\ref{gamma}),
(\ref{akbmod}) and (\ref{akbmev}).
\end{proof}

\begin{rem} When $m$ is even,
$\Lambda(m)$ contains exactly three colorings with two colors and it
defines the unique element in the double coset
$\GL(2,\Z_2)\backslash \Lambda(m)/{\frak D}_m$.
\end{rem}

\begin{exam} \label{Cm}
We set
\[
C(m):=|\GL(2,\Z_2)\backslash \Lambda(m)/{\frak D}_m|.
\]
Using the formula in Theorem~\ref{C(m)}, one finds that
\[
\begin{split}
&C(2)=1, \quad C(3)=1, \quad C(4)=2, \quad C(5)=1, \quad C(6)=4,\quad C(7)=3\\
&C(8)=8, \quad C(9)=8, \quad C(10)=18.\quad C(11)=21, \quad C(12)=48.\\
\end{split}
\]
\end{exam}

\vskip .2cm

We conclude this section with a remark. When $Q$ is an $m$-gon
$(m\ge 3)$, a small cover over $Q$ is a closed surface with euler
characteristic $4-m$ and the cardinality of the set of homeomorphism
classes in small covers over $Q$ is one (resp. two) when $m$ is odd
(resp. even). On the other hand, the double coset (\ref{doub})
agrees with $\GL(2,\Z_2)\backslash \Lambda(m)/{\frak D}_m$ and we
see from Theorem~\ref{C(m)} that its cardinality is strictly larger
than 2 when $m\ge 6$. So, the natural surjective map from the double
coset (\ref{doub}) to the set of homeomorphism classes in small
covers over $Q$ is not injective when $Q$ is an $m$-gon with $m\ge
6$. However, it is bijective when $m=3,4,5$, see Example~\ref{Cm}.

\vskip .3cm

\section{Locally standard 2-torus manifolds of dimension two} \label{sect:2dim}

We shall enumerate the number of equivariant {\diffeomorphism}
classes in locally standard 2-torus manifolds with orbit space $Q$
when $Q$ is a compact surface with only one boundary.

\begin{thm}
Suppose that $Q$ is a compact surface with only one boundary
component with $m$ $(\ge 2)$ vertices and set
\[
h(Q):=|H^1(Q;(\Z_2)^2)/\Aut(Q)|.
\]
Then the number of equivariant {\diffeomorphism} classes in locally
standard 2-torus manifolds over $Q$ is $h(Q)B(m)$, where
$B(m)=|\Lambda(m)/{\frak D}_m|$ is the number discussed in the
previous section.
\end{thm}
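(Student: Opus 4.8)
The plan is to combine the general classification result of Proposition~\ref{QGdiff} with the fact that when $Q$ is a surface with a single boundary circle, the two factors $H^1(Q;(\Z_2)^2)$ and $\Lambda(Q)$ can be decoupled. First I would invoke Proposition~\ref{QGdiff}: the number of equivariant homeomorphism classes over $Q$ equals $|(H^1(Q;(\Z_2)^2)\times \Lambda(Q))/\Aut(Q)|$. The goal is to show this cardinality factors as $h(Q)\cdot B(m)$, i.e. that the diagonal $\Aut(Q)$-action on the product is, for counting purposes, a product of the two separate actions. The key observation is that since $Q$ has exactly one boundary component, every facet lies on that circle, and $\mathcal F(Q)$ is the set of $m$ arcs of $S(m)$ with its cyclic poset structure; hence $\Lambda(Q)$ is literally $\Lambda(m)$, the set of colorings discussed in Section~\ref{sect:color}.

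Next I would analyze the homomorphism $\Phi\colon \Aut(Q)\to \Aut(\mathcal F(Q))$ of (\ref{Phi}). The image lies in $\Aut(\mathcal F(m))$, the automorphism group of the cyclic poset of $m$ arcs, which is exactly the dihedral group ${\frak D}_m$ (rotations and reflections of the labelled circle). The crucial claim is that $\Phi$ is \emph{surjective} onto ${\frak D}_m$ when $Q$ is a surface with one boundary circle: a rotation or reflection of the boundary circle extends to a self-homeomorphism of $Q$. This is intuitively clear—$Q$ is a circle-with-handles (orientable case) or circle-with-crosscaps (non-orientable case), and the mapping class group of such a surface surjects onto the symmetries of its single boundary—but making it precise is the step I expect to require the most care. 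One clean way: realize $Q$ as a disk $D$ with $2g$ (or with crosscaps) handles attached in its interior; any element of ${\frak D}_m$ acting on $\partial D$ extends over $D$ fixing a small interior disk, and one can arrange the handle attachments to be disjoint from the region where the extension is non-trivial, so the extension carries over to $Q$.

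With surjectivity of $\Phi$ onto ${\frak D}_m$ in hand, I would argue as follows. The action of $\Aut(Q)$ on $\Lambda(Q)=\Lambda(m)$ factors through $\Phi$, hence through ${\frak D}_m$, and by surjectivity the orbit set $\Lambda(m)/\Aut(Q)$ is exactly $\Lambda(m)/{\frak D}_m$, of size $B(m)$. To handle the product, note that the action of $\Aut(Q)$ on $H^1(Q;(\Z_2)^2)$ and the action on $\Lambda(m)$ need not have related stabilizers, so one cannot naively multiply orbit numbers. Instead I would use the following: let $K=\ker\Phi$, the subgroup of $\Aut(Q)$ acting trivially on $\mathcal F(Q)$; then $K$ acts trivially on $\Lambda(m)$, so $(H^1\times\Lambda(m))/\Aut(Q) = ((H^1/K)\times\Lambda(m))/({\frak D}_m)$ where ${\frak D}_m\cong\Aut(Q)/K$ acts diagonally. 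Now the essential point is that for a surface $Q$ with one boundary circle, $\Aut(Q)$ acts \emph{trivially} on $H^1(Q;(\Z_2)^2)$—equivalently $H^1(Q;(\Z_2)^2)/\Aut(Q)$ has $h(Q)=|H^1(Q;(\Z_2)^2)|$ elements; this needs checking but follows because the mapping class group acts on $H^1$ through $\GL$ of the intersection form, and the relevant automorphisms here (those used above) can be taken to fix the homology, or more directly because any $h\in\Aut(Q)$ extending a boundary symmetry can be chosen isotopic to one supported near $\partial Q$. Wait—that is too strong in general; the honest statement is just $h(Q):=|H^1(Q;(\Z_2)^2)/\Aut(Q)|$ by \emph{definition}, and what must actually be shown is that the diagonal orbit count equals $h(Q)B(m)$, which holds provided ${\frak D}_m$ acts \emph{freely} on its orbits in $\Lambda(m)$ after passing to a suitable refinement, or provided the stabilizer in ${\frak D}_m$ of each coloring also fixes $H^1/K$ pointwise. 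The cleanest route, and the one I would adopt, is: show that the two actions commute with a section, i.e. pick for each ${\frak D}_m$-orbit in $\Lambda(m)$ a representative whose ${\frak D}_m$-stabilizer lifts to elements of $\Aut(Q)$ acting trivially on $H^1$; then the fiber over each of the $B(m)$ orbit-representatives is $H^1(Q;(\Z_2)^2)/\Aut(Q)$, giving $h(Q)B(m)$. Verifying this compatibility—that stabilizing symmetries of a coloring can be realized homeomorphisms acting trivially on $H^1$—is the main obstacle, and I would resolve it by the explicit "disk-with-handles" model above, arranging that the finite symmetry group of any coloring acts on the handle-configuration and hence can be realized fixing homology classes dual to the handles.
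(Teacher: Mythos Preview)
Your setup is correct up through introducing $K=\ker\Phi$ and reducing to the $\mathfrak{D}_m\cong\Aut(Q)/K$-action on $(H^1(Q;(\Z_2)^2)/K)\times\Lambda(m)$. But from there you miss the key lemma, and your proposed workaround via stabilizers of colorings is both harder and left incomplete.

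The missing step is this: let $H\lhd\Aut(Q)$ be the subgroup acting trivially on $H^1(Q;(\Z_2)^2)$. The paper shows that $\Phi|_H\colon H\to\mathfrak{D}_m$ is \emph{already surjective}. Rotations are realized by homeomorphisms supported in a collar of $\partial Q$, hence lying in $H$; a reflection is realized by noting that any closed surface admits an involution with one-dimensional fixed set acting trivially on $\Z_2$-cohomology, and $Q$ is obtained by removing an invariant disk centered on that fixed set. Once $\Phi|_H$ is surjective, for every $g\in\Aut(Q)$ there is $h\in H$ with $gh\in K$; since $h$ acts trivially on $H^1$, the $\Aut(Q)$-orbit of any cohomology class equals its $K$-orbit. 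Hence $H^1/K=H^1/\Aut(Q)$ and the induced $\mathfrak{D}_m$-action on $H^1/K$ is \emph{trivial}. The diagonal orbit count then factors immediately as $h(Q)\cdot B(m)$.

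You were right to retract the claim that $\Aut(Q)$ acts trivially on $H^1$ (it does not: for $Q$ a punctured torus the paper computes $h(Q)=5$, not $16$). But the correct replacement is not a statement about stabilizers of individual colorings---it is the uniform statement that \emph{every} element of $\mathfrak{D}_m$ lifts to $H$. Your stabilizer approach would in fact require exactly this, since you would need it for every subgroup of $\mathfrak{D}_m$ that arises as a stabilizer; and the union of all such subgroups is all of $\mathfrak{D}_m$ (e.g.\ the full group stabilizes the ``periodic'' two-color coloring when $m$ is even, and each reflection stabilizes some coloring). So your route collapses to the paper's lemma anyway, but obscured.
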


\begin{proof}
By Corollary~\ref{QGdiff} it suffices to count the number of orbits
in $H^1(Q;(\Z_2)^2)\times \Lambda(Q)$ under the diagonal action of
$\Aut(Q)$. Since $Q$ has only one boundary component and $m$
vertices, $\Lambda(Q)$ can be identified with $\Lambda(m)$ in
Section~\ref{sect:color} and $\Aut({\mathcal F}(Q))$ is isomorphic
to the dihedral group ${\frak D}_m$.

\vskip .2cm

Let $H$ be the normal subgroup of $\Aut(Q)$ which acts on
$H^1(Q;(\Z_2)^2)$ trivially. We claim that the restriction of the
natural homomorphism
\begin{equation} \label{surj}
\Aut(Q)\to \Aut({\mathcal F}(Q))\cong {\frak D}_m
\end{equation}
to $H$ is still surjective. An automorphism of $Q$ (as a manifold
with corners) which rotates the boundary circle and fixes the
exterior of its neighborhood is an element of $H$. Therefore $H$
contains all rotations in ${\frak D}_m$. It is not difficult to see
that any closed surface admits an involution which has
one-dimensional fixed point component and acts trivially on the
cohomology with $\Z_2$ coefficient. Since $Q$ is obtained from a
closed surface by removing an invariant open disk centered at a
point in the one-dimensional fixed point set, $Q$ admits an
involution which reflects the boundary circle and lies in $H$. This
implies the claim.

\vskip .2cm

Let $K$ be the kernel of the homomorphism $\Aut(Q)\to \Aut({\mathcal
F}(Q))$.  Then
\begin{equation} \label{set}
|\big(H^1(Q;(\Z_2)^2)\times \Lambda(Q)\big)/\Aut(Q)|=
|\big(H^1(Q;(\Z_2)^2)/K\times \Lambda(Q)\big)/\Aut(Q)|.
\end{equation}
For any element $g$ in $\Aut(Q)$, there is an element $h$ in $H$
such that $gh$ lies in $K$ because the map (\ref{surj}) restricted
to $H$ is surjective. Since $H$ acts trivially on $H^1(Q;(\Z_2)^2)$,
this shows that an $\Aut(Q)$-orbit in $H^1(Q;(\Z_2)^2)$ is same as
an $K$-orbit.  This means that the induced action of $\Aut(Q)$ on
$H^1(Q;(\Z_2)^2)/K$ is trivial. Therefore the right hand side at
(\ref{set}) reduces to
\[
|H^1(Q;(\Z_2)^2)/\Aut(Q)\big)|\ |\Lambda(Q)/\Aut(Q)|.
\]
Here the first factor is $h(Q)$ by definition and the second one
agrees with $|\Lambda(m)/{\frak D}_m|=B(m)$ because of the
surjectivity of the map (\ref{surj}), proving the theorem.
\end{proof}

\begin{exam}
$H^1(Q;(\Z_2)^2)$ is isomorphic to $H^1(Q;\Z_2)\oplus H^1(Q;\Z_2)$
and the action of $\Aut(Q)$ on the direct sum is diagonal. When $Q$
is a disk, $h(Q)=1$. When $Q$ is a real projective plane with an
open disk removed, $H^1(Q;\Z_2)$ is isomorphic to $\Z_2$ and the
action of $\Aut(Q)$ on it is trivial.  Therefore, $h(Q)=4$ in this
case.  When $Q$ is a torus with an open disk removed, $H^1(Q;\Z_2)$
is isomorphic to $(\Z_2)^2$.  The action of $\Aut(Q)$ on it is
non-trivial and it is not difficult to see that $h(Q)=5$ in this
case.
\end{exam}

\vskip .3cm


\begin{thebibliography}{99}


\bibitem{al-be95} J. L. Alperin and R. B. Bell, {\em Groups and
representations}, Graduate Texts in Mathematics {\bf 162} (1995),
Springer-verlag.


\bibitem{bp} V. M. Buchstaber and T.E. Panov, {\em Torus actions and their
applications in topology and combinatorics}, University Lecture
Series, 24. American Mathematical Society, Providence, RI, 2002.


\bibitem{d2} M. Davis, {\em Groups generated by reflections
and aspherical manifolds not covered by Euclidean space}, Ann. of
Math. {\bf 117} (1983), 293-324.

\bibitem{dj} M. Davis and T. Januszkiewicz, {\em Convex
 polytopes, Coxeter orbifolds and torus actions}, Duke Math. J.
 {\bf 61} (1991), 417-451.

\bibitem{djs} M. Davis, T. Januszkiewicz and R. Scott,
{\em Nonpositive curvature of blow-ups}, Sel. math. New ser. {\bf 4}
(1998), 491-547.


\bibitem{e} G. Ewald, {\em Combinatorial Convexity and
Algebraic Geometry}, Graduate Texts in Math., Springer, 1996.


\bibitem{ga-sc02} A. Garrison and R. Scott,
{\em Small covers of the dodecahedoron and the 120-cell}, Proc.
Amer. Math. Soc. {\bf 131} (2002), 963-971.


\bibitem{ha-ma03} A. Hattori and M. Masuda, {\em Theory of
 multi-fans}, Osaka J. Math., {\bf 40} (2003), 1-68.

\bibitem{l} Z. L\"u, {\em 2-torus manifolds, cobordism and small covers},
 arXiv:math/0701928.

\bibitem{m} M. Masuda, {\em Unitary toric manifolds,
 multi-fans and equivariant index}, Tohoku Math. J. {\bf 51}
 (1999), 237-265.

\bibitem{ma-pa03}
M. Masuda and T. Panov, \emph{On the cohomology of torus manifolds},
Osaka J. Math. {\bf 43} (2006), 711-746.

\bibitem{na-ni05} H. Nakayama and Y. Nishimura, {\em The orientability of
 small covers and coloring simple polytopes}, Osaka J. Math. {\bf 42}
 (2005), 243-256.


\end{thebibliography}
\end{document}